\documentclass[12pt]{article}% Math and Physical Sciences Reference Style

\usepackage{authblk} % \affil
\usepackage{hyperref} % ref in PDF
\usepackage{dsfont}
\usepackage[T1]{fontenc} 
\usepackage{amsmath}
\usepackage{amsfonts,amssymb}
\usepackage{extarrows}
\usepackage{geometry}
\usepackage{enumerate}
\usepackage{enumitem}

\usepackage[utf8]{inputenc}
\def\0{\emptyset}

\newtheorem{theorem}{Theorem}[section]
\newtheorem{definition}[theorem]{Definition}
\newtheorem{lemma}[theorem]{Lemma}

\newtheorem{cor}[theorem]{Corollary}
\newtheorem{prop}[theorem]{Proposition}

\newenvironment{proof}{{\noindent\it Proof.}}{\hfill $\square$\par}
\usepackage{graphicx}

\usepackage{enumerate}

\usepackage{
	amsmath,			% Math Environments
	amssymb,			% Extended Symbols
	enumerate,		    % Enumerate Environments
	graphicx,			% Include Images
	lastpage,			% R\part{title}eference Lastpage
	multicol,			% Use Multi-columns
	multirow,			% Use Multi-rows
	pifont,			    % For Checkmarks
}

\usepackage[numbers]{natbib}
\begin{document}

% --- PAPER INFO ---

\title{Algorithm for finding vertex-edge domination number on graphs with bounded treewidth and related problems on planar graphs}

\author[1]{\small\bf Yichen Wang\thanks{E-mail: wangyich22@mails.tsinghua.edu.cn}}
\author[1]{\small\bf Haixiang Zhang\thanks{E-mail:  zhang-hx22@mails.tsinghua.edu.cn}}
\author[2]{\small\bf Mei Lu\thanks{E-mail: lumei@tsinghua.edu.cn}}

\affil[1]{\small Department of Mathematical Sciences, Tsinghua University, Beijing, P.R. China.}

%\author{Yichen Wang}
\date{}

\maketitle

\abstract{
Given a graph $G=(V,E)$, a vertex $u \in V$ {\em ve-dominates} all edges incident to any
vertex of $N_G[u]$. A set $S \subseteq V$ is a {\em ve-dominating set} if for all edges $e\in E$, there exists a vertex $u\in S$ such that $u$ ve-dominates $e$.
The minimum cardinality among all ve-dominating sets is known as the \textit{vertex-edge domination number} (or simply ve-domination number) and denoted by $\gamma_{ve}(G)$.
Finding a minimum ve-dominating set was proved to be NP-complete. Restricted to trees, the problem admits a linear-time algorithm. Treewidth is a commonly used parameter for solving NP-hard problems. In this paper, we present a polynomial-time algorithm for finding a minimum ve-dominating set on graphs with bounded treewidth.
Moreover, we show that the treewidth of a planar graph $G$ with ve-domination number $\gamma_{ve}(G)$ is $O(\sqrt{\gamma_{ve}(G)})$ and present an $O(c^{\sqrt{k}}|V(G)|)$-time algorithm for the $k$-ve-domination problem on planar graphs.
}

{\bf Keywords:}  treewidth; vertex-edge domination; algorithm; planar graphs.
\vskip.3cm

\section{Introduction}\label{sec1}

Let $G=(V,E)$ be a simple undirected graph.
Given a vertex $u \in V$, let $N_G(u)$ be the open neighbor set of $u$, that is, $N_G(u) = \{ v \in V(G)~|~ uv \in E(G)\}$.
Let $N_G[u] = N_G(u) \cup \{u\}$ be the closed neighbor set of $u$ in $G$.
For a vertex set $S$, let $N_G(S) =\bigcup_{v\in S}N_G(v) - S $ and $N[S] = N(S) \cup S$ be the open neighbor set and closed neighbor set of $S$, respectively for a subset $S \subset V(G)$.
When there is no ambiguity, we ignore the subscript $G$ in the above notation.
For an edge $e \in E$, we also use $e$ to denote the subset consisting of two vertices of $e$ for short.
An edge $e \in E$ is \textit{vertex-edge dominated} (or simply ve-dominated) by a vertex $u \in V$ if $e \cap N[u] \neq \emptyset$. A vertex set $S \subseteq V$ is a \textit{vertex-edge dominating set} (or simply ve-dominating set) if for all edges $e \in E$, there exists a vertex $u \in S$ such that $u$ ve-dominates $e$.
The minimum cardinality among all ve-dominating sets is known as the \textit{vertex-edge domination number} (or simply ve-domination number) and denoted by $\gamma_{ve}(G)$. A $k$-dominating set $D$ of $G $ is a set of $k$ vertices of $G$ such that each of the rest of the vertices has at least one neighbor in $D$. The minimum $k$ such that $ G$ has a $k$-dominating set is called the domination number of $G$, denoted by $\gamma (G)$. Obviously, $\gamma_{ve}(G)\le \gamma (G)$.
%If we give each vertex a weight, we nartually have a weighted version of ve-dominating set.

The vertex-edge domination was first introduced by Peters~\cite{vedomination_introduce} and received more attention after Lewis~\cite{vedomination_introduce2} established many new results. Lewis~\cite{vedomination_introduce2} gave lower bounds on $\gamma_{ve}(G)$ for different classes of graphs, such as connected graphs, $k$-regular graphs, cubic graphs, etc. Krishnakumari, Venkatakrishnan and Krzywkowski~\cite{krishnakumari2014bounds} gave the upper and lower bounds of ve-domination number on trees. For other structural results, the readers are referred to~\cite{boutrig2016vertex, zylinski2019vertex}.

From the algorithmic side, Lewis~\cite{vedomination_introduce2} proved that the ve-domination problem is NP-complete for bipartite, chordal, planar, and circle graphs. Lewis~\cite{vedomination_introduce2} also proposed a linear-time algorithm for finding a minimum ve-dominating set on trees. However, Paul and Ranjan~\cite{paul2022} proved that Lewis's algorithm is flawed and proposed a new linear-time algorithm for finding a minimum ve-dominating set on block graphs and a new linear-time algorithm for finding a weighted minimum ve-dominating set on trees. Paul and Ranjan~\cite{paul2022} also proved that finding a minimum ve-dominating set is NP-complete for undirected path graphs.

Decompositions play an important role in the graph theory.
Various decompositions of graphs such as decomposition by clique separators, tree-decomposition and clique-decomposition are often used to design efficient graph algorithms. 
In this paper, we consider the ve-domination problem with the commonly used parameter treewidth.  
Treewidth is a parameter that plays a fundamental role in various graph algorithms.
The treewidth of a graph gives an indication of how far away the graph is from being a tree or forest.
The closer the graph is to being a forest, the smaller is its treewidth.
It is well-known that many NP-complete problems can be solved in polynomial time on graphs of bounded treewidth.
A variety of problems on graphs can be solved in linear time for graphs with bounded treewidth including many domination-like problems \cite{treewidth_sample1_Arnborg1991}, like dominating set \cite{dominationset_treewidth_alber2002}, Roman dominating set~\cite{peng2007roman} and vertex cover $P_3$ problem \cite{VCP3_treewidth_bai2019}.

% The ve-domination is an variation of dominating set.
For dominating set, it was shown that the treewidth of a planar graph $G$ with domination number $\gamma(G)$ is $O(\sqrt{\gamma(G)})$~\cite{ABo} and this fact is used as the basis for several fixed parameter algorithms on planar graphs.
In this paper, we will show that such a relationship remains between treewidth and the ve-domination number, that is, $tw(G) = O(\sqrt{\gamma_{ve}(G)})$. 
Consider a graph $G$ whose vertex set is $\{x, y_i, z_i\}_{1 \le i \le n}$ and edge set is $\{xy_i, y_iz_i\}_{1\le i \le n}$.
It is easy to prove that $\gamma(G) = n $ and $\gamma_{ve}(G) = 1$. Then $\lim \limits_{n \rightarrow +\infty} \gamma(G) / \gamma_{ve}(G) = +\infty$.
In this example, $\gamma(G)$ is ``asymptotically strictly larger'' than $\gamma_{ve}(G)$ and thus, our result is ``strictly better'' than the previous results.

The rest of the paper is organized as follows.
In Section~\ref{sec: Preliminary}, we introduce some notation and basic knowledge about treewidth.
In Section~\ref{sec: tree width algorithm}, we propose a linear-time algorithm to find a minimum ve-dominating set on graphs with bounded treewidth.
In Section~\ref{sec: planar treewidth ve-domination number}, we show that the treewidth of a planar graph $G$ is $O(\sqrt{\gamma_{ve}(G)})$.
In Section~\ref{sec: planar graph algorithm}, we present an algorithm solving $k$-ve-domination problem on planar graphs in $O(c^{\sqrt{k}}n)$ time where $n$ is the order of  $G$.

%In Section~\ref{sec: conclusions}, we discuss the relationship between split-width and treewidth.

\section{Preliminary}\label{sec: Preliminary}

For a graph $G$ and a vertex set $S \subseteq V(G)$, let $G[S]$ be the subgraph of $G$ induced by $S$.
We first give an equivalent definition of ve-dominating set.

%\vspace{.2cm}
\begin{prop}\label{prop: equi def of ve-domination}
	For a graph $G$, a vertex set $D$ is a ve-dominating set of $G$ if and only if $V(G) - N[D]$ is an independent set.
\end{prop}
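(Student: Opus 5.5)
The plan is to unfold the definition of ve-domination edge by edge and observe that both conditions say the same thing about each edge. For an edge $e=xy$, some $u\in D$ ve-dominates $e$ exactly when $e\cap N[u]\neq\emptyset$ for some $u\in D$. This is equivalent to $e\cap N[D]\neq\emptyset$, because $N[D]=\bigcup_{u\in D}N[u]$. Note that the paper defines $N(S)$ with $S$ removed, but $N[S]=N(S)\cup S$ adds $S$ back, so $N[D]$ is indeed the union of the closed neighborhoods. Hence $D$ is a ve-dominating set if and only if every edge of $G$ has at least one endpoint in $N[D]$.

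Next I would rephrase this condition in terms of the complement $W=V(G)-N[D]$. An edge $xy$ has no endpoint in $N[D]$ exactly when both $x,y\in W$, that is, exactly when $xy$ is an edge of $G[W]$. So every edge meets $N[D]$ if and only if $G[W]$ has no edges, which is precisely the statement that $W$ is an independent set.

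For the write-up I would present the two directions explicitly, each as a short contrapositive argument. First, if $W$ contains an edge $xy$, then no $u\in D$ has $x$ or $y$ in $N[u]$, so $xy$ is not ve-dominated and $D$ is not a ve-dominating set. Second, if some edge $xy$ is not ve-dominated, then $x,y\notin N[u]$ for every $u\in D$, so $x,y\in W$ and $W$ is not independent.

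There is no real obstacle here. The only point needing care is confirming that $N[D]$ equals the union of the closed neighborhoods $N[u]$ for $u\in D$, given the paper's convention for $N(S)$, and I have already checked this above.
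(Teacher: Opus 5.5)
Your proposal is correct and follows essentially the same route as the paper: both reduce ve-domination of an edge $xy$ to $\{x,y\}\cap N[D]\neq\emptyset$ and observe that this fails exactly when $x,y\in V(G)-N[D]$. Your explicit check that $N[D]=\bigcup_{u\in D}N[u]$ under the paper's convention for $N(S)$ is a point the paper leaves implicit.
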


\begin{proof}
	Let $D$ be a ve-dominating set of $G$. If there exist $x,y\in V(G)-N[D]$ such that $xy \in E(G)$, then  $\{x,y\}\cap N[D]=\emptyset$, a contradiction.

	Suppose $D$ is a vertex set satisfying that $V(G) - N[D]$ is an independent set. Then for any edge $xy \in E(G)$, $\{x,y\}\cap N[D]\not=\emptyset$ which implies
	 $D$ is a ve-dominating set.
\end{proof}
% The proof is simple.

In the following, we may understand a ve-dominating set from both perspectives.
That is, a ve-dominating set $D$ of $G$ dominates every edge and satisfies that $V(G) - N[D]$ is an independent set.
Given a ve-dominating set $D$ of $G$, a vertex $x \in V(G)$ is \textit{dominated} by $D$ if $x \in N[D]$ and $x$ is \textit{abandoned} by $D$ if $x \in V(G) - N[D]$.

Now we give definitions related to treewidth.  The treewidth of a graph is defined through
the concept of tree-decompositions. 
% In the following, we will use $I$ to denote the vertex set of $T$ when $T$ is a tree and we call the vertex of $T$ {\em node}.
\vskip.2cm
\begin{definition}\label{def: tree-decomposition}
	A tree-decomposition of a graph $G = (V; E)$ is a pair $(X; T)$, where $T(I; F)$ is a tree with vertex set $I$ and edge set $F$, and $X = \{X_i \mid i \in I\}$ is a family of subsets of $V$, one for each node of $T$, such that:
	\begin{itemize}
		\item $\bigcup \limits_{i \in I}X_i = V$;
		\item for each edge $(u,v) \in E$, there exists an $i \in I$ such that $u,v \in X_i$;
		\item for all $i,j,k \in I$, if $j$ is on the path from $i$ to $k$ in $T$, then $X_i \cap X_k \subseteq X_j$.
	\end{itemize}
	Here, the vertices in $T$ are called \textit{nodes}.
	The sets $\{X_i\}$ are called \textit{bags}.
\end{definition}

The  width of a tree-decomposition $(X; T)$ is $\max \limits_{i\in I} |X_i| - 1$.
The treewidth  of $G$    is the minimum treewidth over all possible tree-decompositions of $G$.
We need the definition of nice tree-decomposition~\cite{ABo}.
\vskip.2cm

\begin{definition}
A tree-decomposition $(X; T)$ of  $G$ is \textit{nice}, if it satisfies the following properties:
\begin{itemize}
	\item Every node of $T$ has at most  two child nodes.
\vskip.2cm
	\item If a node $i$ has two child nodes $j$ and $k$, then $X_i = X_j = X_k$ and $i$ is called a {\em join node}.
\vskip.2cm
	\item If a node $i$ has one child $j$, then one of the following situations must hold:
		\begin{enumerate}[leftmargin=2em, label= (\alph*)]
			\vskip.2cm
			\item $|X_i| = |X_j| + 1$ and $X_j \subset X_i$, $i$ is called an {\em introduce node}, or
			\vskip.2cm
			\item $|X_i| = |X_j| - 1$ and $X_i \subset X_j$, $i$ is called a {\em forget node}.
		\end{enumerate}
\end{itemize}
\end{definition}

It can be shown that any tree-decomposition of  $G$ can be transformed into a nice tree-decomposition of $G$ with the same treewidth and size $O(n)$ in linear time \cite{nice_tree_decomposition_kloks_book1994}.

%Let $(X;T)$ be a tree-decomposition of  $G(V,E)$ and  $i$ be a node in $T$.
%We define $T_i$ to be the subtree of $T$ rooted at $i$, including $i$. Let $X_i$ be the corresponding bag of $i$.
A {\em rooted tree-decomposition} is a tree-decomposition with a distinguished root
node, denoted by $r$. Given a rooted tree-decomposition $(X;T)$ with a root node $r$ and a node $i$ of $T$, let $Desc(i)$
be the set of descendants of node $i$ in $T$, including $i$; let $T_i = T [Desc(i)]$ be a subtree of $T$ rooted at $i$; let $G_i =
G[V_i]$, where $V_i=\bigcup_{s\in T_i}X_s$. That is, $G_i$ is induced by the vertices in the bags of subtree $T_i$. Then $T_r=T$ and $G_r=G$.
%If $i$ is a leaf, we define $V_i = X_i, E_i = \{uv\in E ~|~ u,v\in X_i\}$; if $i$ is an internal node with children $L,R$, we define $V_i = V_L \cup V_R, E_i = E_L \cup E_R$.
%The graph $G(V_i,E_i)$ is denoted by $G_i$.

% ------------------------------- split width algorithm -----------------------

% ---------------------- Tree width algorithm --------------------------
\section{An algorithm for ve-domination number} \label{sec: tree width algorithm}
In this section, we  describe an algorithm to find a minimum ve-dominating set on graphs with bounded treewidth. Let $T(I; F)$ be a nice tree-decomposition of $G$ rooted at $r$. We first give some results on $T(I; F)$.

\vskip.2cm
\begin{prop}\label{prop: edge properties in a tree-decomposition for introduce node} Let $i$ be an introduce node and  $j$  its child node.
	Let $X_i - X_j = \{y\}$.
	For every node $k$ in $T_j$ but $k \neq j$, if $x \in X_k-X_i$, then $xy \notin E$.
\end{prop}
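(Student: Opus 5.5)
The plan is to argue by contradiction, using the third (connectivity) axiom of Definition~\ref{def: tree-decomposition}. Suppose some node $k \neq j$ of $T_j$ and some $x \in X_k - X_i$ satisfy $xy \in E$.

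First I would locate $y$. The vertex $y$ lies in $X_i$ but not in $X_j$. The unique path in $T$ from $i$ to any node of $T_j$ passes through $j$. So if $y$ belonged to a bag $X_s$ with $s \in T_j$, the third axiom applied to $i, j, s$ would give $y \in X_i \cap X_s \subseteq X_j$, which is false. Hence $y$ appears in no bag of $T_j$, and every bag containing $y$ lies outside $T_j$.

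Next I would use the edge $xy$. By the second axiom there is a node $m$ with $x, y \in X_m$, and by the previous step $m \notin T_j$. Since $k \in T_j$ and $m \notin T_j$, the path in $T$ from $k$ to $m$ must leave the subtree $T_j$. The only edge joining $T_j$ to the rest of $T$ is $ji$, so this path passes through $i$. Applying the third axiom to $k, i, m$ gives $x \in X_k \cap X_m \subseteq X_i$, which contradicts $x \notin X_i$. Therefore $xy \notin E$.

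There is no real obstacle here. The one point needing care is the tree-structure fact that $i$ separates $T_j$ from the rest of $T$, because $j$ is a child of $i$. This is what forces both paths, from $i$ into $T_j$ and from $k$ out to $m$, to go through $j$ and $i$ respectively. The hypothesis $k \neq j$ is not actually used: the same argument works for $k = j$.
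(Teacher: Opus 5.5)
Your proof is correct and is essentially the paper's argument. Both take a bag containing $x$ and $y$ and apply the connectivity axiom once to the bags containing $x$ and once to the bags containing $y$; only the order differs. The paper first places the common bag inside $T_j$ away from $j$ and derives $y \in X_j$, whereas you first exclude $y$ from every bag of $T_j$ and derive $x \in X_i$. Your version also makes explicit the fact that $i$ separates $T_j$ from the rest of $T$, which the paper uses tacitly.
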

\vspace{0.3em}
\noindent{\bf Proof.}
Suppose $xy \in E$. By the definition of tree-decomposition, there exists a node $t$ such that $x,y \in X_t$ and $t$ is a node in $T_i$.
Since $x \notin X_i$ and $y\notin X_j$, we have $t$ is a node in $T_j$ and $t \not=j$.
 Thus we have $y \in X_j$ by the definition of tree-decomposition, a contradiction. $\square$

\vspace{0.3em}

By Proposition \ref{prop: edge properties in a tree-decomposition for introduce node}, the following result is obvious.
\vspace{0.3em}
\begin{cor}\label{coro: properties of N(y) for introduce node}
	Let $i$ be an introduce node and  $j$ its child node. Suppose $X_i - X_j = \{y\}$.
	Then $N_{G_i}(y) \subseteq X_j$.
\end{cor}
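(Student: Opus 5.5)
To prove Corollary~\ref{coro: properties of N(y) for introduce node}, I take an arbitrary neighbour $x$ of $y$ in $G_i$ and show that $x\in X_j$. The argument is a case analysis on where $x$ sits inside $V_i=\bigcup_{s\in T_i}X_s$, using Proposition~\ref{prop: edge properties in a tree-decomposition for introduce node} to rule out the bad case.

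Since $i$ is an introduce node with unique child $j$, the subtree $T_i$ consists of $i$ together with $T_j$. Hence $V_i = X_i \cup V_j$, and $V_j=\bigcup_{k\in T_j}X_k$. Take $x\in N_{G_i}(y)$, so $x\in V_i$, $x\neq y$ and $xy\in E$.

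\textbf{Case 1: $x\in X_i$.} Since $X_i=X_j\cup\{y\}$ and $x\neq y$, we get $x\in X_j$.

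\textbf{Case 2: $x\notin X_i$.} Then $x\in X_k$ for some node $k$ of $T_j$.
\begin{itemize}
\item If $k=j$, then $x\in X_j\subseteq X_i$, contradicting $x\notin X_i$.
\item If $k\neq j$, then $k$ is a node of $T_j$ with $k\neq j$ and $x\in X_k-X_i$. Proposition~\ref{prop: edge properties in a tree-decomposition for introduce node} then gives $xy\notin E$, contradicting the choice of $x$.
\end{itemize}
So Case 2 cannot occur, and every neighbour of $y$ in $G_i$ lies in $X_j$. This proves $N_{G_i}(y)\subseteq X_j$.

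The only point needing care is that Proposition~\ref{prop: edge properties in a tree-decomposition for introduce node} applies only to nodes $k\neq j$. That is why the case $k=j$ is handled separately, using $X_j\subset X_i$. Otherwise the argument is a direct unwinding of definitions, with no real obstacle.
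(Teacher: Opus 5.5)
Your proof is correct and takes the same approach as the paper, which states only that the corollary is obvious from Proposition~\ref{prop: edge properties in a tree-decomposition for introduce node}. Your split into $x\in X_i$ (so $x\in X_j$ because $x\neq y$) and $x\in X_k - X_i$ for some node $k\neq j$ of $T_j$ (excluded by that proposition, with the case $k=j$ excluded since $X_j\subseteq X_i$) is exactly that remark written out.
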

\vspace{0.5em}

\begin{prop}\label{prop: edge properties in a tree-decomposition for join node}
Let $i$ be a join  node and  $j,k$ be its two child nodes.	
	If $x \in V_j - X_i$ and $y \in V_k - X_i$,
	then $xy \notin E$.
\end{prop}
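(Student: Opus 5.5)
I will argue by contradiction, following the pattern of the proof of Proposition \ref{prop: edge properties in a tree-decomposition for introduce node}. Suppose $x \in V_j - X_i$, $y \in V_k - X_i$ and $xy \in E$. By the definition of tree-decomposition there is a node $t$ of $T$ with $x, y \in X_t$. The goal is to show that $x$ and $y$ must then both lie in the bag $X_i$, contradicting $x,y\notin X_i$.

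The key tool is the third condition of Definition \ref{def: tree-decomposition}: for each vertex $v$, the set of nodes whose bags contain $v$ induces a connected subtree of $T$. Since $x \in V_j$, some node $a \in T_j$ has $x \in X_a$. Since $x \notin X_i$ and $i$ lies on the path from $a$ to any node outside $T_j$, every node whose bag contains $x$ lies in $T_j$. Otherwise, if $x\in X_b$ for some $b\notin T_j$, then $i$ lies on the $a$--$b$ path, giving $x\in X_a\cap X_b\subseteq X_i$. (The case $b$ in $T_k$ or above $i$ both route through $i$, since $j$ and $k$ are the children of $i$.) Symmetrically, every node whose bag contains $y$ lies in $T_k$.

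The node $t$ would therefore have to lie in both $T_j$ and $T_k$. These subtrees are disjoint, because $j$ and $k$ are distinct children of $i$, which is a contradiction. The only point needing care is the claim that the path from a node of $T_j$ to any node outside $T_j$ passes through $i$. This follows because removing the edge $ij$ separates $T_j$ from the rest of the rooted tree, and the unique path from $T_j$ exits via $j$ and then $i$. Note that $X_i=X_j=X_k$ is not even needed beyond $x,y\notin X_i$.
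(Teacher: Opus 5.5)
Your proof is correct and is essentially the paper's argument: both take a bag $X_t$ containing $x$ and $y$ and use the third condition of Definition~\ref{def: tree-decomposition} (any path from $T_j$ or $T_k$ to the rest of $T$ passes through $i$) to contradict $x,y\notin X_i$. Your organization is slightly cleaner than the paper's case split on where $t$ lies: you confine every bag containing $x$ to $T_j$ and every bag containing $y$ to $T_k$, which are disjoint subtrees. As you note, this also avoids needing $X_i=X_j=X_k$.
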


\noindent{\bf Proof.}
Suppose $xy \in E$. By the definition of tree-decomposition, there exists a node $t$ such that $x,y \in X_t$.
Since $x,y \notin X_i$, we have that $t$ must be a node in $T_i$ and
$t \notin\{ i, j, k\}$.
Then $t$ is a node in $T_j$ or $T_k$, say $T_j$. Thus we have $y \in X_i$ by the third condition of Definition~\ref{def: tree-decomposition}, a  contradiction.
$\square$
\vspace{0.5em}

The following result is obvious by Proposition~\ref{prop: edge properties in a tree-decomposition for join node}.

\vspace{0.5em}
\begin{cor}\label{coro: properties of edge connectivity in join node}
Let $i$ be a join  node and  $j,k$ its two child nodes.	
	For every vertex $x \in V_j - X_i$, we have $N_{G_i}(x) \subseteq V_j$ and $N_{G_j}(x) = N_{G_i}(x)$.
\end{cor}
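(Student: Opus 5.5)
The plan is to derive Corollary~\ref{coro: properties of edge connectivity in join node} directly from Proposition~\ref{prop: edge properties in a tree-decomposition for join node}, together with the observation that $V_i = V_j \cup V_k$. That observation holds because $T_i$ consists of $i$, $T_j$ and $T_k$, and $X_i = X_j \subseteq V_j$. Fix $x \in V_j - X_i$ and let $y \in N_{G_i}(x)$, so $y \in V_i$ and $xy \in E$.

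For the first claim, $N_{G_i}(x) \subseteq V_j$, I argue by contradiction. Suppose $y \notin V_j$. Then $y \in V_k$, and $y \notin X_i$, since $X_i = X_j \subseteq V_j$. So $y \in V_k - X_i$, and Proposition~\ref{prop: edge properties in a tree-decomposition for join node}, applied to $x \in V_j - X_i$ and $y \in V_k - X_i$, gives $xy \notin E$. This contradicts $xy \in E$, so $y \in V_j$.

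For the second claim, $N_{G_j}(x) = N_{G_i}(x)$, I use the fact that $G_j = G[V_j]$ and $G_i = G[V_i]$ are both induced subgraphs of $G$, with $V_j \subseteq V_i$. The inclusion $N_{G_j}(x) \subseteq N_{G_i}(x)$ is immediate. Conversely, any $y \in N_{G_i}(x)$ lies in $V_j$ by the first claim, and $xy \in E$, so $xy$ is an edge of the induced subgraph $G_j$ and $y \in N_{G_j}(x)$.

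No step is a real obstacle. The only point needing care is to justify $X_i \subseteq V_j$ via $X_i = X_j$, which is what places a neighbor outside $V_j$ into $V_k - X_i$.
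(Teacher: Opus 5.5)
Your proof is correct and takes the same route as the paper, which simply calls the corollary obvious from Proposition~\ref{prop: edge properties in a tree-decomposition for join node}. You fill in the details the paper leaves implicit: $V_i = V_j \cup V_k$ together with $X_i = X_j \subseteq V_j$ places any neighbor outside $V_j$ into $V_k - X_i$, and for induced subgraphs $N_{G_j}(x) = N_{G_i}(x) \cap V_j$, which gives the equality.
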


\vspace{0.5em}
Now we present the main theorem of this paper.
The algorithm is designed by the idea of dynamic programming.

\vspace{0.5em}
\begin{theorem}\label{thm: main thm: poly algorithm of ve-dominating set}
	For every fixed integer $k$, there is a deterministic algorithm that, given a graph $G$ with treewidth at most $k$ on $n$ vertices,
	calculates the ve-domination number of $G$ and finds such a ve-dominating set in time $O(n5^{k+1})$.
\end{theorem}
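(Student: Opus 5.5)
We run a dynamic program bottom-up over a nice tree-decomposition $(X;T)$ of $G$ of width at most $k$ and $O(n)$ nodes, rooted at $r$. Such a decomposition can be obtained in linear time for fixed $k$ by Bodlaender's algorithm followed by the transformation of \cite{nice_tree_decomposition_kloks_book1994}. To reduce the number of node types, we may add a root chain of forget nodes and leaf chains of introduce nodes so that $X_r=\emptyset$ and every leaf bag is empty.

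By Proposition~\ref{prop: equi def of ve-domination}, $D$ is ve-dominating if and only if the set of abandoned vertices $V-N[D]$ is independent. So for a partial solution $D\subseteq V_i$ we give each vertex $v\in X_i$ one of five states:
\begin{itemize}
\item[(1)] $v\in D$;
\item[(2)] $v\notin D$ and $v$ is already dominated by $D$ inside $G_i$;
\item[(3)] $v$ is not yet dominated and is \emph{promised} to be dominated later by a vertex outside $V_i$;
\item[(4)] $v$ is not yet dominated, is declared abandoned, and some neighbor of $v$ in $V_i$ is abandoned or promised;
\item[(5)] $v$ is declared abandoned and no such neighbor is known yet.
\end{itemize}
In practice it is cleaner to use the five states $\{$in $D$, dominated, promised-dominated, abandoned, ``abandoned but all neighbours so far dominated''$\}$, whichever makes the transitions local. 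The essential invariant is this. For a colouring $c:X_i\to\{1,\dots,5\}$, let $A_i(c)$ be the minimum $|D\cap V_i|$ over sets $D\subseteq V_i$ such that:
\begin{itemize}
\item[(a)] every vertex of $V_i-X_i$ is either in $N[D]$ or abandoned;
\item[(b)] no edge of $G_i$ joins two vertices that are abandoned or undominated;
\item[(c)] the bag states agree with $c$.
\end{itemize}
Forgotten vertices never gain new neighbours, by Corollaries~\ref{coro: properties of N(y) for introduce node} and~\ref{coro: properties of edge connectivity in join node}. Hence a forgotten vertex is final: it is either dominated, or abandoned with no abandoned neighbour. This is exactly what makes the local table sufficient. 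There are $5^{|X_i|}\le 5^{k+1}$ entries per node.

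The transitions are as follows.
\begin{itemize}
\item \textbf{Leaf.} The table holds a single entry with value $0$.
\item \textbf{Introduce node.} Let $X_i=X_j\cup\{y\}$. By Corollary~\ref{coro: properties of N(y) for introduce node}, all neighbours of $y$ in $G_i$ lie in $X_j$, so every check for $y$ is local.
  \begin{itemize}
  \item Putting $y\in D$ adds $1$, and it converts neighbours of $y$ in state ``promised'' into ``dominated''. Dually, an entry of $A_i$ with a neighbour in state ``dominated'' may be read off from the child entry with that neighbour in state ``promised''.
  \item If $y\notin D$, then $y$ is dominated exactly when it has a bag neighbour in $D$.
  \item If $y$ is abandoned or undominated, we reject every colouring in which $y$ has a bag neighbour that is abandoned or undominated.
  \end{itemize}
\item \textbf{Forget node.} Let $X_i=X_j-\{y\}$. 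Then $A_i(c)$ is the minimum of $A_j$ over extensions of $c$ in which $y$ is dominated or legally abandoned. The ``promised'' state is disallowed here, since no future vertex can be adjacent to $y$.
\item \textbf{Join node.} We combine $A_j(c_1)+A_k(c_2)-|\{v:c(v)=1\}|$ over compatible pairs. For the domination bit, $v$ is dominated in $G_i$ if it is dominated in either child; this is the usual ``split the requirement'' trick from dominating set \cite{dominationset_treewidth_alber2002}. Independence of abandoned vertices is an edge condition, and every edge of $G_i$ lies in $G_j$ or in $G_k$ by Proposition~\ref{prop: edge properties in a tree-decomposition for join node}, so it is inherited from both children.
\end{itemize}
The answer is $A_r(\emptyset)$. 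A minimum set $D$ is recovered by storing back-pointers in the usual way.

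The main obstacle is the join node together with the running time. A naive join costs $25^{k+1}$ or worse. To achieve the claimed $O(5^{k+1})$ per node, the states must be designed so that the join is a coordinatewise monotone combination. Concretely, we use a ``dominated'' / ``need-not-be-dominated-here'' split: the entry for ``dominated'' at $v$ is a minimum over the child pairs (dominated, don't-care) and (don't-care, dominated). Each child table is first relaxed by a monotone prefix-minimum over the partial order on states, computed in $O(5^{k+1}\cdot k)$ time, or in $O(5^{k+1})$ with the standard state-change technique of Alber et al. After this relaxation each join entry needs only $O(1)$ lookups. A second subtlety is the ``abandoned'' state at joins: a bag vertex abandoned in $G_i$ must be undominated in both children, so its state has to agree exactly in both, and it is not subject to the relaxation.

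Correctness follows by induction on $T$, proving that $A_i(c)$ equals the stated minimum. Soundness is checked per node type using the corollaries above. Completeness follows by restricting a global optimum $D$ to $V_i$ and reading off its states on $X_i$. With $O(n)$ nodes and $O(5^{k+1})$ work per node, the total time is $O(n5^{k+1})$.
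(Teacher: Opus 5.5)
Up to your fifth state, your scheme is the paper's: labels ``in $D$'', ``dominated'', ``promised'', ``abandoned''; ``promised'' is forbidden at forget nodes; domination is split between the children at joins. However, two steps fail as written.

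\textbf{The join-node running time.} You single this out as the main obstacle, but it is not resolved. After relaxing the child tables to a ``don't-care'' state, take a parent entry whose set $M$ of ``dominated'' bag vertices has size $m$. It is still the minimum of $\hat A_j(c_1)+\hat A_k(c_2)-|c^{-1}(1)|$ over the $2^m$ ways of choosing, for each $v\in M$, which child supplies its domination. These choices are coupled through the sum, so a coordinatewise prefix-minimum does not collapse them to $O(1)$ lookups. With five states, the splits of the dominated vertices alone give $\sum_m\binom{k+1}{m}2^m4^{k+1-m}=6^{k+1}$ combinations per join node, so $O(n5^{k+1})$ does not follow.

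The paper reaches $5^{k+1}$ by using exactly four labels and enumerating the $2^m$ splits: $\sum_m\binom{k+1}{m}2^m3^{k+1-m}=5^{k+1}$. Your fifth state carries no needed information. An edge between two abandoned vertices is simply forbidden. An edge between an abandoned and a promised vertex is harmless, because the promise is checked when the promised vertex is forgotten. Drop that state and count as the paper does.

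\textbf{The introduce rule and invariant (b).} These reject every edge between two ``abandoned or undominated'' bag vertices, and by your state (3) a promised vertex is undominated. This discards genuine partial solutions. Take $m\ge 2$ triangles $wu_{2t-1}u_{2t}$ sharing the vertex $w$. Then $\{w\}$ is the unique minimum ve-dominating set, so $\gamma_{ve}=1$. Now take a nice decomposition whose leaf chain introduces $u_1,u_2$ before $w$; this is a legitimate input for your construction. The restriction of $\{w\}$ must label both $u_1$ and $u_2$ ``promised'', which your rule rejects. So every accepted set contains $u_1$ or $u_2$, and the returned value is at least $2$.

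The correct local test is the paper's validity condition: no edge inside $Z_4$ and no edge from $Z_1$ to $Z_3\cup Z_4$, while $3$--$3$ and $3$--$4$ edges are allowed. The invariant should then say that $V_i-N_{G_i}[S_i]-Z_3$ is independent. As stated, your (a) is vacuous and your state (4) describes configurations that (b) forbids, so the semantics must be fixed before the induction can be checked.

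On the positive side, reading the non-supplying child's label as ``don't care'' is the right semantics for the join. It covers a bag vertex dominated from both sides by vertices outside the bag. The paper's strict label $3$ (requiring $Z_3(g)\cap N[S_i]=\emptyset$), combined with splits restricted to $\{2,3\}$, does not literally capture that case. The don't-care reading is what makes a two-way split per dominated vertex sufficient.
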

\vspace{0.5em}

\noindent\textbf{Proof of Theorem~\ref{thm: main thm: poly algorithm of ve-dominating set}:}
Let $T(I; F)$ be a nice tree-decomposition of $G$ rooted at $r$ with width $k$. Then $|X_i|:=n_i\le k+1$ for all $i\in I$. In the following,
 four
digits will be assigned to the vertices in the bag $X_i$:
\begin{enumerate}[label=``\arabic*'']
	\item  meaning that the vertex belongs to the ve-dominating set,
	\item  meaning that the vertex is already dominated at the current stage of the algorithm,
	\item meaning that the vertex is still asking for a domination at the current stage of the algorithm, and
	\item meaning that the vertex will be abandoned.
\end{enumerate}

We hope that, for a ve-dominating set $S$ of $G$, the vertices in $S$ are labelled $1$, the vertices in $N(S)\setminus S$ are labelled $2$, and the vertices in $V - N[S]$ are labelled $4$.
Since we will proceed the algorithm from the leaves to the root, at the stage of $X_i$, the vertices of label $2$ are dominated by some vertex in $V_i$ and vertices of label $3$ should be dominated by some vertex in $S\setminus V_i$ and not be dominated by any vertex in $S\cap V_i$.

For each bag $X_i$, the \textit{guessing}  of $X_i=\{x_1,\ldots,x_{n_i}\}$ is a mapping $g:X_i\rightarrow \{1,2,3,4\}$ assigning four different digits  to the vertices in the bag. Then there are at most $4^{k+1}$ guessings of $X_i$.
Let $\mathcal{G} (X_i)$ be the set of all guessings on $X_i$.
For $d=1,2,3,4$ and $g \in \mathcal{G}(X_i)$, let $Z_d(g)$ be the set of vertices in $X_i$ with label $d$, that is,
\begin{equation}
	Z_{d}(g) = \{x \in X_i ~|~ g(x) = d\}, d=1,2,3,4.
\end{equation}

\begin{definition}\label{def: partial ve-dominating set}
	Let $g \in \mathcal{G}(X_i)$, a vertex set $S_i \subseteq V_i$ is a \textit{partial ve-dominating set} of $g$ on $G_i$
if $S_i$ has the following properties:
\begin{enumerate}
	\item $S_i \cap X_i = Z_1(g)$ (meaning the selected vertices are properly guessed),
	\item $Z_2(g) \subseteq N_{G_i}(S_i)$ (meaning the vertices of label $2$ are dominated),
	\item $Z_3(g) \cap N[S_i] = \emptyset$ (meaning the vertices of label $3$ are not dominated by any vertex in $S_i$),
	\item $Z_4(g) \cap N[S_i] = \emptyset$ (meaning the vertices of label $4$ are properly guessed),
	\item $V_i - N_{G_i}[S_i] - Z_3(g) $ is an independent set of $G_i$ (meaning $S_i$ is a ve-dominating set of $V_i - Z_3(g)$).
\end{enumerate}
\end{definition}
Given a guessing $g \in \mathcal{G}(X_i)$, let
\begin{equation*}
D(g) = \min \left( \{ |S_i| ~|~ \mbox{ $S_i \subseteq V_i$ is a partial ve-dominating set of $g$ on $G_i$} \} \cup \{ + \infty \} \right),
\end{equation*}
where $D(g)=+ \infty$ means there is no partial ve-dominating set of $g$. 

We say a guessing $g \in \mathcal{G}(X_i)$ is \textit{valid} if the following holds:
\begin{enumerate}
	\item there is no edge between $Z_1(g)$ and $Z_4(g)$,
	\item there is no edge between $Z_1(g)$ and $Z_3(g)$,
	\item $Z_4(g)$ is an independent set,
	% \item and $Z_2(g) \subseteq N(Z_1(g))$.
\end{enumerate}
It directly follows from the definition that $D(g) = +\infty$ if $g$ is not valid.
So in the following, we only consider $D(g)$ for valid guessings.
We call $D(g)$ \textit{the partial ve-domination number} of $g$. 
Notice that for $g \in \mathcal{G}(X_r)$, if $Z_3(g) = \emptyset$, then a partial ve-dominating set of $g$ is also a ve-dominating set of $G$.
That is, $D(g)$ is the ve-domination number in this case.

% During the algorithm, we will try to enumerate all ``good'' guessings at each node from leaves to the root.
% Finally, at the root node, we will output the ``good'' guessing with the minimum partial ve-domination number with $Z_3(g) = \emptyset$.

% and the partial ve-domination number is equal to ve-domination number.
% FIGURE
%\begin{figure}[t]\label{fig: partial ve-dominating set}
    %\centering
   % \includegraphics[width=\linewidth]{./src/PartialVEDominatingSet.jpg}
    %\caption{ Relationship of sets of a partial ve-dominating set }
%\end{figure}

%Notice that the vertices labeled $3$ is not considered because they will be dominated by hyper-layer vertices.
%The minimum cardinality among all partial ve-dominating set $S_i$ of guessing $g$ is called \textit{the partial ve-domination number} of $g$ denoted by $D(g)$.
%If there is no partial ve-dominating set of given guessing $g$, then the partial ve-domination number is denoted as $+\infty$.
%That is,

% During the algorithm, we need to calculate the size of the minimum partial ve-dominating set of each guessing.
% For each node $i$, we will use a mapping $F_i: \mathcal{G} (X_i) \rightarrow \mathbb{N} \cup \{+\infty \}$. For $g\in \mathcal{G}(X_i)$,  the value $F_i(g)$  records in the algorithm  the partial ve-domination number of $g$.
% For each type of nodes in the nice tree-decomposition, we will propose an algorithm to calculate $F_i$ and show that for each  node $i$ and $g \in \mathcal{G}(X_i)$, $F_i(g) = D(g) $.
%During the calculation of $F_i$, we may use the value of $\{F_j ~|~ \mbox{$X_j$ is the child of $X_i$}\}$.

\noindent{\bf Step 1. } In the first step of the algorithm, for each leaf node $i$ of the tree-decomposition,
we initialize the $D(g)$ for all $g \in \mathcal{G}(X_i)$. Let $i$ be a leaf node. 
% A guessing $g \in \mathcal{G}(X_i)$ is \textit{canonical} if for all $x \in Z_2(g)$, $N(x) \cap Z_1(g) \neq \emptyset$.
For any valid $g \in \mathcal{G}(X_i)$, it is easy to see that 
\[
D(g) = \left\{\begin{aligned}
	&|Z_1(g)| &\qquad& \text{ if $g$ is valid}, \\
	&+\infty &\qquad& \text{ otherwise. }
\end{aligned}
\right.
\]
% \[D(g) = \left\{\begin{aligned}
% &|Z_1(g)| &\qquad& \text{ if } g \text{ is canonical and valid}, \\
% &+\infty &\qquad& \text{ otherwise. }
% \end{aligned}
% \right.\]	
In the first case, $Z_1(g)$ is the only partial ve-dominating set of $g$ on $G_i$.
In the second case, there is no partial ve-dominating set of $g$.
 This step can be carried out in at most $O(4^{k+1}k^2)$ time for each leaf node $i$, since there are at most $4^{k+1}$ guessings on $X_i$ and to check whether a guessing is valid takes time $O(k^2)$.
% In the following, we will characterize the function of $D(g)$ for each type of nodes in the tree-decomposition.
% Trivially, the mappings of the leaves are monotonous, yielding the induction base.

\noindent{\bf Step 2. } Now we visit the nodes of the tree-decomposition from the leaves to the root, evaluating $D(g)$ for each valid guessing according to the following rules.

\textbf{Forget node:}
Suppose $i $ is a forget node and let $j$ be its child. Assume $X_j = X_i \cup \{y\}$.
Let $g \in \mathcal{G}(X_i)$. A guessing $g' \in \mathcal{G}(X_j)$ is \textit{compatible} with $g$ if the following conditions hold.
\begin{enumerate}
	\item For all $x \in X_i$, $g'(x) = g(x)$.
	\item $g'(y) \in \{1, 2, 4\}$.
\end{enumerate}
It is natural to understand the first rule. 
The second rule actually says that the vertex $y$ cannot be labelled $3$, because $y$ is going to be forgotten.
If it is dominated, it must be dominated by some vertex in $V_j$.

For any valid $g \in \mathcal{G}(X_i)$, we claim that
% \[F_i(g) = \left\{\begin{aligned}
% &\min \{F_j(g') ~|~ g' \in \mathcal{G}(X_j) \text{ and } g' \text{  is compatible with } g \} &\qquad& \text{ if } g \text{ is valid} , \\
% &+\infty &\qquad& \text{ otherwise.}
% \end{aligned}
% \right.\]
\begin{equation}\label{eq: forget node}
	D(g) = \min \{D(g') ~|~ g' \in \mathcal{G}(X_j) \text{ and } g' \text{  is compatible with } g \}.
\end{equation}
% We show that for each forget node $i$ and $g \in \mathcal{G}(X_i)$, $F_i(g) = D(g) $.
If $D(g) < +\infty$, let $S_i$ be a partial ve-dominating set of $g$ on $G_i$ such that $D(g) = |S_i|$.
Let
\begin{equation}
	g'(z) = \left\{ \begin{array}{ll}
		g(z) & z \in X_j-\{y\},\\
		1 & z=y, y \in S_i,\\
		2 & z=y, y \in N(S_i), \\
		4 & z=y, y \in V_i - N[S_i].
	\end{array} \right.
\end{equation}
It is easy to verify that $S_i$ is also a partial ve-dominating set of $g'$ on $G_j$ and $g'$ is compatible with $g$. 
Therefore, $D(g) \ge \min \{ D(\tilde{g}) \mid \tilde{g} \in \mathcal{G}(X_j),\ \tilde{g} \text{ is compatible with } g \}$.
On the contrary, suppose that $M := \min \{ D(\tilde{g}) \mid \tilde{g} \in \mathcal{G}(X_j),\ \tilde{g} \text{ is compatible with } g \} < +\infty$, and let $g' \in \mathcal{G}(X_j)$ be compatible with $g$ such that $D(g') = M$.
Then let $S_j$ be a partial ve-dominating set of $g'$ on $G_j$ such that $D(g') = |S_j|$ and let $S$ be a ve-dominating set of $G$ such that $S_j = S \cap V_j$.
Note that $V_i = V_j$, then it is easy to verify that $S_j$ is also a partial ve-dominating set of $g$ on $G_i$.
Then $D(g') = M \le D(g)$.
As a conclusion, we have proved (\ref{eq: forget node}).

Given  $g\in \mathcal{G}(X_i)$, there are at most three guessings $g'\in \mathcal{G}(X_j)$ compatible with $g$. 
Note that there are at most $4^{k+1}$ guessings on $X_i$.
Determine whether a guessing is valid takes time $O(k^2)$.
Then to calculate $D(g)$ for all $g \in \mathcal{G}(X_i)$ for a forget node takes time $O(4^{k+1}k^2)$.

\textbf{Introduce node:}
Suppose $i $ is an introduce node, and let $j$ be its child. Assume $X_j = X_i - \{y\}$. Then $V_j = V_i - \{y\}$.
For any $x\in X_i - \{y\}$ and $g \in \mathcal{G}(X_i)$, let $\phi(g) \subseteq \mathcal{G}(X_j)$ be a collection of guessings on $X_j$ such that for every $g' \in \phi(g)$, for each vertex $x \in X_j$, $g'(x) \in \{2,3\}$ if $g(x) = 2$ and $x \in N(y)$, and $g'(x) = g(x)$ otherwise.

For any  $g \in \mathcal{G}(X_i)$,
we use $g|_{X_j}$ to represent the guessing in $\mathcal{G}(X_j)$ by limiting $g$ to $X_j$.
For any valid $g \in \mathcal{G}(X_i)$, we claim that
\begin{equation}\label{eq: introduce node}
	D(g) = \left\{\begin{aligned}
	& \min_{g' \in \phi(g)} \{D(g') + 1\} &\qquad& \text{ if } g(y)=1, \\
	% &D(g |_{X_j}) &\qquad& \text{ if } g(y)=2 \text{  and } N(y) \cap Z_1(g) \neq \emptyset,\\
	&+\infty &\qquad& \text{ if } g(y)=2 \text{  and } N(y) \cap Z_1(g) = \emptyset,\\
	&D(g|_{X_j}) &\qquad& \text{ otherwise }.
	\end{aligned}
	\right.
\end{equation}

\vskip.2cm
{\bf Case 1:} $g(y) = 1$.
In this case, $y \in Z_1(g)$. 
First, we claim that when $D(g) < +\infty$, we have $\min_{g' \in \phi(g)} \{D(g') + 1\}  \le D(g)$.
Let $S_i$ be a partial ve-dominating set of $g$ on $G_i$ such that $D(g)=|S_i|$.
Then we would like to prove that $S_j \triangleq S_i \setminus \{y\}$ is a partial ve-dominating set on $G_j$ of some $g' \in \phi(g)$.
Define $g'$ as follows:
\begin{equation}
	g'(x) = \left\{ \begin{array}{ll}
		g(x) & \text{ if }g(x) \neq 2,\\
		2 & \text{ if }g(x) = 2, x \in N(S_j),\\
		3 & \text{ if }g(x) = 2, x \notin N(S_j).
	\end{array} \right.
\end{equation}
By the definition, we have $g' \in \phi(g)$.

By the definition of $\phi(g)$, it is easy to verify properties 1, 4 and 5 in Definition~\ref{def: partial ve-dominating set}, while properties 2 and 3 directly follow from the definition of $g'$.
Then $S_j$ is a partial ve-dominating set of $g'$ on $G_j$.
Thus $D(g) = |S_i| = |S_j| + 1 \ge \min_{g' \in \phi(g)} \{D(g') + 1\}$.

Then we claim that when $\min_{g' \in \phi(g)} \{D(g') + 1\} < + \infty$, let $g' \in \phi(g)$ be a guessing such that $D(g') + 1 = \min_{g' \in \phi(g)} \{D(g') + 1\}$, then $D(g) \le D(g') + 1$.
Let $S_j$ be a partial ve-dominating set of $g'$ on $G_j$ such that $D(g') = |S_j|$.
Then we would like to prove that $S_i \triangleq S_j \cup \{y\}$ is a partial ve-dominating set of $g$ on $G_i$.
Let us verify the properties in Definition~\ref{def: partial ve-dominating set} one by one.
\begin{enumerate}
	\item $S_i \cap X_i = S_j \cap X_j \cup \{y\} = Z_1(g') \cup \{y\} = Z_1(g)$.
	\item Let $x$ be a vertex in $Z_2(g)$.
	Note that $Z_2(g') \subseteq N_{G_j}(S_j) \subseteq N_{G_i}(S_i)$.
	If $x \notin Z_2(g')$, then $g'(x) = 3$.
	Since $g' \in \phi(g)$, $x \in N(y)$.
	Then in all cases, we have $x \in N_{G_i}(S_i)$, which leads to $Z_2(g) \subseteq N_{G_i}(S_i)$.
	\item $Z_3(g) \subseteq Z_3(g')$. Note that $Z_3(g') \cap N_{G_j}[S_j] = \emptyset$. Since $g$ is valid, $Z_3(g) \cap N(y) = \emptyset$. 
	Then $Z_3(g) \cap N_{G_i}[S_i] = \emptyset$.
	\item $Z_4(g) = Z_4(g')$. Similarly, $Z_4(g) \cap N_{G_i}[S_i] = \left( Z_4(g') \cap N_{G_j}[S_j] \right) \cup \left( Z_3(g) \cap N(y) \right) = \emptyset$.
	\item $V_i - N_{G_i}[S_i] - Z_3(g) = V_j - N_{G_j}[S_j] - Z_3(g')$ is an independent set.
\end{enumerate}
Then $D(g) \le |S_i| = D(g') + 1$.
As a conclusion, we have proved (\ref{eq: introduce node}) in this case.

\vskip.2cm

{\bf Case 2:} $g(y)=2$  and $N(y) \cap Z_1(g) = \emptyset$.
In this case, we would like to prove there is no partial ve-dominating set of $g$ on $G_i$.
Suppose otherwise, let $S_i$ be a partial ve-dominating set of $g$ on $G_i$.
Then since $y \in N(S_i)$, assume $z \in S_i$ is connected to $y$. 
By the condition, $z \notin X_i$.
By the second condition of Definition~\ref{def: tree-decomposition}, there exists a bag $X_k$ such that $y,z \in X_k$.
Since $z \notin X_i$, and $y$ is a new vertex introduced in $X_i$, then $k$ is not in the subtree $T_i$.
However, $z$ is a vertex in $V_i$ and $z$ is in a bag outside $T_i$, then by the third condition of Definition~\ref{def: tree-decomposition}, $z \in X_i$, a contradiction.

{\bf Case 3:} otherwise.

First, if $D(g) < +\infty$ we claim that $D(g|_{X_j}) \le D(g)$.
Let $S_i \subseteq V_i$ be a partial ve-dominating set of $g$ on $G_i$ such that $D(g)=|S_i|$.
Then we would like to prove that $S_j \triangleq S_i$ is a partial ve-dominating set of $g|_{X_j}$ on $G_j$ by verifying the properties in Definition~\ref{def: partial ve-dominating set} one by one.
\begin{enumerate}
	\item $S_j \cap X_j = S_i \cap X_i = Z_1(g) = Z_1(g|_{X_j})$.
	\item $Z_2(g|_{X_j}) \subseteq Z_2(g)\setminus \{y\} \subseteq N_{G_i}(S_i) = N_{G_j}(S_j)\setminus \{y\}$.
	\item $Z_3(g|_{X_j}) = Z_3(g) \setminus\{y\}$, then $Z_3(g|_{X_j}) \cap N_{G_j}[S_j] = \left( Z_3(g) \cap N_{G_i}[S_i] \right) \setminus\{y\} = \emptyset$.
	\item $Z_4(g|_{X_j}) = Z_4(g) \setminus\{y\}$, then $Z_4(g|_{X_j}) \cap N_{G_j}[S_j] = \left( Z_4(g) \cap N_{G_i}[S_i] \right) \setminus\{y\} = \emptyset$.
	\item $V_j - N_{G_j}[S_j] - Z_3(g|_{X_j}) \subseteq V_i - N_{G_i}[S_i] - Z_3(g)$ is an independent set.
\end{enumerate}
Then we have $D(g|_{X_j}) \le |S_j| = D(g)$.

Then we claim that if $D(g|_{X_j}) < +\infty$, let $g' \in \mathcal{G}(X_j)$ be a guessing such that $D(g') = D(g|_{X_j})$, then $D(g) \le D(g|_{X_j})$.
Let $S_j \subseteq V_j$ be a partial ve-dominating set of $g|_{X_j}$ on $G_j$ such that $D(g|_{X_j}) = |S_j|$.
Then we would like to prove that $S_i \triangleq S_j$ is a partial ve-dominating set of $g$ on $G_i$.
Let us verify the properties in Definition~\ref{def: partial ve-dominating set} one by one.
\begin{enumerate}
	\item $S_i \cap X_i = S_j \cap X_j = Z_1(g|_{X_j}) = Z_1(g)$.
	\item If $g(y) \neq 2$, then $Z_2(g) = Z_2(g|_{X_j}) \subseteq N_{G_j}(S_j) \subseteq N_{G_i}(S_i)$. If $g(y) = 2$, then $N(y) \cap Z_1(g) \neq \emptyset$, then $Z_2(g) = Z_2(g|_{X_j}) \cup \{y\} \subseteq N_{G_i}(S_i)$.
	\item If $g(y) \neq 3$, then $Z_3(g) = Z_3(g|_{X_j})$. Note that $N_{G_j}[S_j] \subseteq N_{G_i}[S_i]\cup \{y\}$. Then $Z_3(g) \cap N_{G_i}[S_i] = \left( Z_3(g|_{X_j}) \cap N_{G_j}[S_j] \right) = \emptyset$.
	If $g(y) = 3$, then $Z_3(g) = Z_3(g|_{X_j}) \cup \{y\}$. If $Z_3(g) \cap N_{G_i}[S_i] \neq \emptyset$, then the only possibility is $y \in N_{G_i}[S_i]$.
	By Corollary~\ref{coro: properties of N(y) for introduce node}, $y$ is connected to a vertex in $X_i \cap S_i$ which should be labelled $1$, a contradiction to the validity of $g$.
	\item The proof of the statement that $Z_4(g) \cap N_{G_i}[S_i] = \emptyset$ is the same as the last item by replacing $Z_3$ with $Z_4$.
	\item Similarly, the statement that $V_i - N_{G_i}[S_i] - Z_3(g)$ is an independent set only fails when $g(y) \neq 3$ and $y \notin N_{G_i}[S_i]$. In this case, we have $g(y) = 4$ and $y$ is connected to a vertex $z$ in $V_i - N_{G_i}[S_i] - Z_3(g)$. 
	By Corollary~\ref{coro: properties of N(y) for introduce node}, $z \in X_i$, then $g(z) = 4$, which contradicts the validity of $g$.
\end{enumerate}

In conclusion, we have proved (\ref{eq: introduce node}) in all cases.
Then we can calculate $D(g)$ for all valid guessings $g \in \mathcal{G}(X_i)$ in time $O(4^{k+1}k^2)$ using (\ref{eq: introduce node}).

\textbf{Join node:}
Suppose $i $ is a join node, and let $j$ and $k$ be its two child nodes. Let $g \in \mathcal{G}(X_i)$, $g_j \in \mathcal{G}(X_j)$ and $g_k \in \mathcal{G}(X_k)$.
We say $g_j$ and $g_k $ \textit{divide} $g$ if the following conditions hold.
\begin{enumerate}
	\item For any $x \in Z_1(g) \cup Z_3(g) \cup Z_4(g)$, $g_j(x) = g_k(x) = g(x)$.
	\item For any $x \in Z_2(g) \cap N(Z_1(g))$, $g_j(x) = g_k(x) = 2$.
	\item For any $x \in Z_2(g)$ but $x \notin N(Z_1(g))$, $\{g_j(x), g_k(x)\} = \{2,3\}$, that is, one of $g_j(x), g_k(x)$ is two and the other is three.
\end{enumerate}

We claim that for any valid $g \in \mathcal{G}(X_i)$, 
let
\[
	F(g) \triangleq \min \{D(g_j) + D(g_k) - |Z_1(g)| ~|~ g_j \in \mathcal{G}(X_j),g_k \in \mathcal{G}(X_k), g_j  \text{ and } g_k\text{ divide } g\},
\]
then we have
\begin{equation}\label{eq: D(g) join node}
	D(g) = F(g).
	% D(g) = \left\{\begin{aligned}
	% 	&\min \{D(g') + D(g'') - |Z_1(g)| ~|~ g' \in \mathcal{G}(X_j),g'' \in \mathcal{G}(X_k)\}  &\qquad& \text{ if } g'  \text{ and } g''\text{ divide } g, \\
	% 	&+\infty &\qquad& \text{ otherwise.}
	% \end{aligned}
	% \right.
\end{equation}

First, we claim that if $D(g) < +\infty$, then $F(g) \le D(g)$.
Let $S_i\subseteq V_i$ be a partial ve-dominating set of $g$ on $G_i$ such that $D(g)=|S_i|$. 
Let $S_j = S_i \cap V_j$ and $S_k = S_i \cap V_k$. 
Then $S_i = S_j \cup S_k$ and $|S_i| = |S_j| + |S_k| - |S_i \cap X_i|$ by Definition~\ref{def: tree-decomposition}.
Then we would like to prove that $S_j$ is a partial ve-dominating set of $g_j$ on $G_j$ for some $g_j$ and $S_k$ is a partial ve-dominating set of $g_k$ on $G_k$ for some $g_k$.
Define respectively $g_j \in \mathcal{G}(X_j)$ and $g_k \in \mathcal{G}(X_k)$ as following:
\[g_j(x) = \left\{\begin{aligned}
&g(x) &\qquad& \text{ if } g(x)\in Z_1(g)\cup Z_3(g)\cup Z_4(g), \\
&2 &\qquad& \text{ if } x\in Z_2(g) \text{  and } N(x) \cap S_j \neq \emptyset,\\
&3 &\qquad& \text{ if } x\in Z_2(g) \text{  and } N(x) \cap S_j = \emptyset,
\end{aligned}
\right.\]
\[g_k(x) = \left\{\begin{aligned}
&g(x) &\qquad& \text{ if } g(x)\in Z_1(g)\cup Z_3(g)\cup Z_4(g), \\
&3 &\qquad& \text{ if } x\in Z_2(g) \text{  and } N(x) \cap S_k \neq \emptyset,\\
&2 &\qquad& \text{ if } x\in Z_2(g) \text{  and } N(x) \cap S_k = \emptyset.
\end{aligned}
\right.\]
Then since $S_j \cap S_k = S_i \cap X_i = Z_1(g)$, it is easy to verify that $g_j$ and $g_k$ divide $g$.
We first show that $S_j$ is a partial ve-dominating set of $g_j$ on $G_j$ by verifying the properties of Definition~\ref{def: partial ve-dominating set}.
\begin{enumerate}
	\item $S_j \cap X_j = S_i \cap V_j \cap X_j = S_i \cap X_i = Z_1(g) = Z_1(g_j)$.
	\item $Z_2(g_j) \subseteq N_{G_j}(S_j)$ follows from the definition of $g_j$.
	\item $Z_3(g_j) \cap N[S_j] \subseteq Z_3(g) \cap N[S_i] = \emptyset$.
	\item $Z_4(g_j) \cap N[S_j] \subseteq Z_4(g) \cap N[S_i] = \emptyset$.
	% \item $Z_4(g_j) \subseteq \left(V_i - N_{G_i}[S_i] \right) \cap V_j \subseteq V_j - N_{G_j}[S_j]$.
	\item We claim that $V_j - N_{G_j}[S_j] - Z_3(g_j)$ is an independent set.
	Otherwise, assume there are two vertices $z_1,z_2 \in V_j - N_{G_j}[S_j] - Z_3(g_j)$ connected by an edge.
	If $z_1,z_2 \in X_i$, then $g(z_1) = g(z_2) = 4$, a contradiction to the validity of $g$.
	Then one of $\{z_1,z_2\}$ is in $V_j \setminus X_i$, say $z_1$.
	By Corollary~\ref{coro: properties of edge connectivity in join node}, $z_1$ cannot be connected to a vertex in $V_k\setminus X_i$, then $z_1 \in V_i - N_{G_i}[S_i] - Z_3(g)$.
	Since $V_i - N_{G_i}[S_i] - Z_3(g)$ is an independent set, $z_2$ must be outside the set, that is, $z_2 \in N_{G_i}[S_i] \cup Z_3(g)$.
	Recall that $z_2 \in V_j - N_{G_j}[S_j] - Z_3(g_j)$.
	If $z_2 \in S_i$, then $z_2 \in S_j$, a contradiction.
	If $z_2 \in Z_3(g)$, then $z_2 \in Z_3(g_j)$, a contradiction.
	If $z_2 \in N(S_i)\setminus S_i$ and $z_2 \in X_i$, then $g(z_2) = 2$ by definition of the labeling on $X_i$, so $z_2 \in N_{G_j}[S_j] \cup Z_3(g_j)$, a contradiction.
	Finally, if $z_2 \in N(S_i)\setminus S_i$ and $z_2 \notin X_i$, then $z_2 \in N_{G_j}[S_j]$, a contradiction.
	In all cases, we have a contradiction.
\end{enumerate}
Thus $S_j$ is a partial ve-dominating set of $g_j$ on $G_j$.
Similarly, we can show $S_k$ is a partial ve-dominating set of $g_k$ on $G_k$.
So $F(g) \le |S_j| + |S_k| - |Z_1(g)| = D(g)$ by the definition of $F(g)$.

Then we claim that if $F(g) < +\infty$, then $D(g) \le F(g)$.
Let $g_j \in \mathcal{G}(X_j)$ and $g_k \in \mathcal{G}(X_k)$ such that $g_j$ and $g_k$ divide $g$ and $F(g) = D(g_j) + D(g_k) - |Z_1(g)|$.
Let $S_j \subseteq V_j$ and $S_k \subseteq V_k$ be partial ve-dominating sets of $g_j$ on $G_j$ and $g_k$ on $G_k$ respectively such that $|S_j| = D(g_j)$ and $|S_k| = D(g_k)$.
Then we would like to prove that $S_i \triangleq S_j \cup S_k$ is a partial ve-dominating set of $g$ on $G_i$.
Note that the size of $S_i$ is $|S_i| = |S_j| + |S_k| - |S_j \cap S_k| = D(g_j) + D(g_k) - |Z_1(g)| = F(g)$.
Again, let us verify the properties of Definition~\ref{def: partial ve-dominating set} one by one.

\begin{enumerate}
	\item $S_i \cap X_i = (S_j\cap X_i) \cup (S_k\cap X_i) = Z_1(g_j) \cup Z_1(g_k) = Z_1(g)$ since $g_j$ and $g_k$ divide $g$.
	\item $Z_2(g) = Z_2(g_j) \cup Z_2(g_k) \subseteq N_{G_j}(S_j) \cup N_{G_k}(S_k) \subseteq N_{G_i}(S_i)$.
	\item $Z_3(g) = Z_3(g_j) \cap Z_3(g_k)$ and $N[S_i] = N[S_j] \cup N[S_k]$. Then $Z_3(g) \cap N[S_i] = \emptyset$.
	\item $Z_4(g) = Z_4(g_j) = Z_4(g_k)$ and $N[S_i] = N[S_j] \cup N[S_k]$. For every vertex $x \in Z_4(g)$, $x \notin N_{G_j}[S_j] \cup N_{G_k}[S_k] =N_{G_i}[S_i]$, then $Z_4(g) \cap N_{G_i}[S_i] = \emptyset$.
	\item We claim that $V_i - N_{G_i}[S_i] - Z_3(g)$ is an independent set.
	Suppose otherwise, let $z_1, z_2$ be two vertices in $V_i - N_{G_i}[S_i] - Z_3(g)$ connected by an edge.
	If $z_1, z_2 \in X_i$, then $g(z_1) = g(z_2) = 4$, a contradiction to the validity of $g$.
	Then one of $z_1$ is in $V_i \setminus X_i$, say $z_1$, without loss of generality, assume $z_1 \in V_j$.
	Then $z_1 \in V_j - N_{G_j}[S_j] - Z_3(g_j)$.
	By Corollary~\ref{coro: properties of edge connectivity in join node}, $z_2 \in V_j$.
	Since $V_j - N_{G_j}[S_j] - Z_3(g_j)$ is an independent set, we have $z_2 \notin N_{G_j}[S_j] \cup Z_3(g_j)$.
	Then $z_2 \in X_i$, $g_j(z_2) = 3$ and $g(z_2) \neq 3$.
	It only happens when $g(z_2) = 2$ since $g_j$ and $g_k$ divide $g$.
	Then $z_2 \in N[S_i]$, a contradiction.
\end{enumerate}
Hence $S_i $ is a partial ve-dominating set of $g$ on $G_i$ and then we have $D(g) \le |S_i| = |S_j| + |S_k| - |Z_1(g)| = F(g)$.
Thus (\ref{eq: D(g) join node}) holds.

	If $|Z_2(g)| = m$, there are at most $2^m$ pairs of $g_j$ and $g_k$ that divide $g$.
	There are $\binom{k+1}{m} 3^{k+1-m}$ guessings $g$ such that $|Z_2(g)|=m$. So the evaluation of $D(g)$ for a join node can be carried out in time
	 $O\left( \sum \limits_{0 \le m \le k+1}2^m \binom{k+1}{m} 3^{k+1-m} \right) =  O(5^{k+1}) $.
	
% finish proof

\noindent{\bf Step 3. } Let $r$ denote the root of $T$.
We finally output:
\begin{equation}
	\gamma_{ve}(G)=\min \{D(g) \mid \mbox{$g \in \mathcal{G}(X_r)$, $g$ is valid and $Z_3(g) = \emptyset$} \}.
\end{equation}

From above discussion, the total running time of the algorithm is $O(n5^{k+1})$.
It is worth mentioning that the above algorithm only calculates the ve-domination number.
If we want one or all ve-dominating sets with minimum cardinality, we only need to store the guessing relationship at each node.
We can also calculate one or all ve-dominating sets with minimum cardinality from root to leaves after computing $D(g)$ at each node.
From all of the above, Theorem~\ref{thm: main thm: poly algorithm of ve-dominating set} is proved.
$\square$

%In addition, we can modify the count method to solve the weighted version minimum ve-dominating set.
%It will bring no additional computing cost.

% ------------------------ Planar Graph Treewidth ----------------------
\section{Treewidth and ve-domination on planar graphs}\label{sec: planar treewidth ve-domination number}

In this section, we consider the relationship between treewidth and ve-domination number on planar graphs. 
The methods are generalized from~\cite{ABo}.
We first give some definitions.
\vskip.2cm
\begin{definition}
	A crossing-free embedding of a graph $G$ in the plane is called \textbf{outerplanar} if each vertex lies on the boundary of the outer face.
	A graph $G$ is called \textbf{outerplanar} if it admits an outerplanar embedding in the plane.
\end{definition}

The following generalization of the notion of outer planarity can be found in~\cite{Baker}.

%\vspace{.2cm}

	A crossing-free embedding of a graph $G$ in the plane is called \textbf{$r$-outerplanar} if, for $r=1$, the embedding is outerplanar, and for $r > 1$, inductively, when removing all vertices on the boundary of the outer face and their incident edges, the embedding of the remaining subgraph is $(r-1)$-outerplanar.
	A graph $G$ is called \textbf{$r$-outerplanar} if it admits an $r$-outerplanar embedding.
	The smallest number $r$, such that $G$ is $r$-outerplanar, is called the \textbf{outerplanarity} number. For a given $r$-outerplanar embedding of a graph $G(V,E)$, we define the $i$-th layer $L_i$ inductively as follows.
	Layer $L_1$ consists of the vertices on the boundary of the outer face, and for $i > 1$, layer $L_i$ is the set of vertices that lie on the boundary of the outer face in the embedding of the subgraph $G-(L_1 \cup \cdots \cup L_{i-1})$.

The relationship between $r$-outerplanarity and treewidth is shown in Theorem~\ref{thm: outerplanarity treewidth}.

%\vspace{.2cm}
\begin{theorem}[Theorem 83 of \cite{PlanarityTreewidth}]\label{thm: outerplanarity treewidth}
	An $r$-outerplanar graph has treewidth of at most $3r-1$.
\end{theorem}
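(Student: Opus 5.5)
The statement is quoted from \cite{PlanarityTreewidth}, so the plan is to reproduce the standard argument. It builds a tree-decomposition layer by layer, by induction on $r$, with bags of size at most $3r$.

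\textbf{Reduction to a triangulated graph.} First triangulate $G$ so that each face becomes a triangle, adding edges inside faces but no new vertices. This must be done so that every vertex of $L_i$ gains neighbours only in $L_{i-1}\cup L_i\cup L_{i+1}$, which keeps the embedding $r$-outerplanar. Supergraphs have at least the treewidth of their subgraphs, so it suffices to treat this case. One can also assume $G$ is connected by handling components separately.

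\textbf{Spanning tree and fundamental cycles.} Build a spanning tree $T$ of maximum degree at most $3$ in an auxiliary structure. The usual route goes through the dual graph, giving the outer face special treatment: the edges not in a spanning tree of $G$ form a spanning tree of the dual. For each vertex $v$, define a bag from the edge remembering of $T$. Concretely, for each dual vertex (a triangle face $f$), let the bag contain the three vertices of $f$ together with, for every non-tree edge $e$ whose fundamental cycle separates, the endpoints of the edge. This gives the standard Bodlaender bound of $3r-1$.

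\textbf{Counting and the main obstacle.} The key count is that an edge's fundamental cycle crosses each layer boundary only boundedly often, so each face receives at most two extra vertices per layer. This leads to bag size $3r$ and width $3r-1$.

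The main obstacle is verifying the connectivity condition (the third condition of Definition~\ref{def: tree-decomposition}) together with the per-layer count. I would prove both by induction on $r$: remove $L_1$, apply the hypothesis to the $(r-1)$-outerplanar remainder, and add at most three vertices of $L_1$ to each bag. This uses that each face touching $L_2$ sees a bounded piece of the outer cycle, and that an outerplanar graph has treewidth at most $2$ (the base case).
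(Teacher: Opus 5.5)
There is no proof in the paper to compare against: the bound is quoted from Bodlaender. Your proposal therefore has to stand on its own, and as written it has a genuine gap. The decisive step is never carried out.

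The bag rule ``the three vertices of $f$ together with, for every non-tree edge $e$ whose fundamental cycle separates, the endpoints of $e$'' does not say what is separated from what. The only property you require of $T$ is maximum degree $3$. For such an unconstrained $T$, fundamental cycles can be long and deeply nested, so nothing ties the added vertices to the layers. The count ``at most two extra vertices per layer'' is asserted with no mechanism. Taken literally it gives $2r+3$, which is not $3r$ and exceeds it for $r\le 2$. Both the edge-covering and the connectivity axioms are explicitly left open. The phrase ``edge remembering'' also points to a different construction: bags on the vertices and edges of a spanning tree of $G$, holding one endpoint of each non-tree edge whose cycle passes there. Those bags are only small when $G$ has small degree, and triangulating raises degrees.

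The fallback induction does not close the gap. The hypothesis applied to $G-L_1$ hands you an \emph{arbitrary} tree decomposition, unrelated to the embedding. There is then no way to choose ``at most three vertices of $L_1$'' per bag so that each $L_1$-vertex occupies a subtree and all edges inside $L_1$ and between $L_1$ and $L_2$ are covered. You would need a stronger, structured induction hypothesis.

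Your triangulation invariant also does not preserve layers. Take a face $u\,a\,b\,c$ with $u\in L_1$ and $a,b,c\in L_2$. A chord $ac$ inside it can cut $b$ off from the outer face of $G-L_1$ and push $b$ into $L_3$.

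Your skeleton (triangulate, primal spanning tree, dual tree of the non-tree edges, one bag per face) does become a proof once $T$ is tied to the layers and the bags are root paths. The steps are as follows.

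\begin{enumerate}
\item Assume $G$ is connected. Every face of $G$ meets at most two consecutive layers. Every vertex of $L_i$ with $i\ge 2$ shares a face with a vertex of $L_{i-1}$.
\item In each face, add a fan from a vertex of lowest layer on that face. This gives every vertex of $L_i$, $i\ge2$, a neighbour in $L_{i-1}$.
\item Place an apex $o$ in the outer face, adjacent to all of $L_1$. Complete to a triangulation $H$.
\item Every edge of $H$ joins equal or consecutive layers, with $o$ in layer $0$. Hence $v\in L_i$ is at distance exactly $i$ from $o$. In a BFS tree $T$ of $H$ from $o$, the root path $P(v)$ meets each of $L_i,\dots,L_1$ exactly once and then ends at $o$.
\item The duals of the non-tree edges form a spanning tree $T^*$ of $H^*$. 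Give the face $abc$ the bag $(P(a)\cup P(b)\cup P(c))\setminus\{o\}$.
\end{enumerate}

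Each bag has at most three vertices per layer, hence at most $3r$. Every edge lies on a triangle, so it is covered. The faces whose bag contains $x$ are exactly those with a corner in the subtree $T_x$ of descendants of $x$. They occur consecutively around the contour of $T_x$, and consecutive ones are separated only by non-tree edges, except at the single tree edge from $x$ to its parent. So they form a subtree of $T^*$. This gives width $3r-1$ directly, with no induction on $r$.
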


%\vspace{.2cm}

A maximal connected subgraph without a cut vertex of $G$ is called a \textit{block}.
A block is either a maximal biconnected subgraph with at least $3$ vertices, or an edge, or a single vertex.
A {\em layer decomposition} of an $r$-outerplanar embedding of graph $G$ is a forest of height $r-1$.
The nodes of the forest correspond to different blocks of the subgraphs of $G$ induced by a layer.
For each layer with vertex set $L_i$, suppose the blocks of the subgraph of $G$ induced by $L_i$ have vertex sets $C_{i,1}, \ldots, C_{i, l_i}$. Each $C_{i,j}$ is called a \textit{layer component}.
Then $L_i = \bigcup_{j=1}^{l_i} C_{i,j}$ and we have $l_i$ nodes that represent the nodes of layer $L_i$, one for each such block.

% \begin{figure}[t]
% 	\centering         %使图片居中放置
% 	\includegraphics[width=0.7\linewidth]{./src/split.png}
% 	\caption{$C_{i,j}$ is split at cut vertex $x$ into four parts.}
% 	\label{fig: split}
% \end{figure}

% If $C_{i,j}$ has a cut vertex $x$ and let $A_1, A_2, \ldots, A_t$ be the components of $C_{i,j} - {x}$.
% Split $C_{i,j}$ into $C_{i,j} = G[A_k \cup \{x\}]$ (see Figure~\ref{fig: split}).
% If $C_{i,j}$ is not biconnected, split in the same way until all parts are biconnected.
% All split biconnected parts are denoted by $\{C_{i,j} \mid 1 \le k \le t_{i,j}\}$ where $t_{i,j}$ is the number of split biconnected parts split from $C_{i,j}$.

The layer components have the following properties due to the definition of blocks:

\begin{enumerate}
	\item If $|C_{i,j}| > 2$ , then $C_{i,j}$ is biconnected.
	\item $|C_{i,j} \cap C_{i,j'}| \le 1$ for any $j \neq j'$.
	% \item There are at most one vertices in $C_{i,j} \cap C_{i,j'}$ for any $j \neq j'$.
\end{enumerate}

Each layer component node $C_{1,j}$ is the root of a tree in the forest.
%  so we will have one tree per block of $G[L_i]$, representing the exterior vertices of the block.
Two layer component nodes will be adjacent if one is connected to the other from inside.
Here we say $C_{i',j'}$ is connected to $C_{i,j}$ from inside if:
\begin{enumerate}
	\item $i' = i + 1$, and $C_{i,j}$ is neither a single vertex nor an edge.
	\item no vertices in $C_{i',j'}$ lie on the boundary of the outer face in $G'= G[(V-\bigcup_{t=1}^{i}L_t) \cup C_{i,j}]$,
	\item and there exists an edge between $C_{i',j'}$ and $C_{i,j}$.
\end{enumerate}

This means that a layer component node $C_{i,j}$ can be adjacent only to layer component nodes of the form $C_{i-1,j'}$ or $C_{i+1,j''}$.
If $C_{i,j}$ is adjacent to $C_{i-1,j'}$, then the vertices of $C_{i,j}$ lie within the area formed by the subgraph induced by $C_{i-1,j'}$.
Note that the layer component nodes on the $i$-th level of the forest correspond to the layer components of the form $C_{i,j}$.
One easily observes that the planarity of $G$ implies that the layer decomposition must indeed be a forest.

We need some further notation.
	A layer component $C_{i,j}$ of layer $L_i$ is called \textbf{non-vacuous} if there are vertices from layer $L_{i+1}$ in the interior of $C_{i,j}$ (i.e., in the region enclosed by the subgraph induced by $C_{i,j}$).
	So $C_{i,j}$ is non-vacuous iff the corresponding component node in the layer decomposition has a child.
	A layer component $C_{i,j}$ of layer $L_i$ is called \textbf{$s$-non-vacuous} if,
	for $s=1$, $C_{i,j}$ is non-vacuous, and for $s > 1$, inductively, there exists a $(s-1)$-non-vacuous layer component from layer $L_{i+1}$ in the interior of $C_{i,j}$ (i.e., in the region enclosed by the subgraph induced by $C_{i,j}$).
	So $C_{i,j}$ is $s$-non-vacuous iff the corresponding component node in the layer decomposition has distance $s-1$ to a leaf node. 
	We have the following result.

%\vspace{.2cm}
\begin{lemma}\label{lemma: boundary cycle}
	Let $\emptyset \neq C \subseteq C_{i,j}$ be a subset of a non-vacuous layer component $C_{i,j}$ of layer $i$, where $i \ge 2$.
	Then there exists a unique smallest (in number of vertices) cycle $B(C)$ (which is called the boundary cycle of $C$) in layer $L_{i-1}$, such that $C$ is contained in the region enclosed by $B(C)$.
	No other vertex of layer $L_{i-1}$ is contained in this region. And there exists a layer component $C_{i-1,j'}$ that contains $B(C)$.
\end{lemma}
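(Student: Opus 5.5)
We argue directly from the layer structure of the fixed $r$-outerplanar embedding, as in \cite{ABo}. Let $H = G[L_1\cup\cdots\cup L_{i-1}]$, so that $L_{i-1}$ is exactly the set of vertices on the outer face of $G'' = G - (L_1\cup\cdots\cup L_{i-2})$. Since $i\ge 2$ and $C\subseteq L_i$, every vertex of $C$ lies strictly inside the outer face boundary of $G''$.

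First fix a vertex $v\in C$ and consider the subgraph $G[L_{i-1}]$, drawn in the plane. Because $v$ is not on the outer face of $G''$ but is on the outer face of $G''-L_{i-1}$, the point $v$ lies in a bounded face of the plane drawing of $G[L_{i-1}]$. The boundary of any bounded face of a plane graph contains a cycle enclosing that face. Hence among the cycles of $G[L_{i-1}]$ that enclose $v$ there is one whose interior region is minimal. Note that $C\subseteq C_{i,j}$ and $C_{i,j}$ is connected; if $|C|\ge 2$, the relevant vertices are joined by paths inside $C_{i,j}\subseteq L_i$, and these paths avoid $L_{i-1}$. Therefore all of $C$ lies in the same face $f$ of $G[L_{i-1}]$. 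Let $B(C)$ be the boundary of that face. More precisely, it is the cycle formed by the block of $G[L_{i-1}]$ that contains the frontier of $f$.

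Next we show that $B(C)$ is a cycle and that it lies in a single layer component. The face $f$ is bounded, and its frontier must contain a cycle separating $f$ from the unbounded face. Since $f$ is a face, the innermost cycle enclosing $f$ has $f$ on one side, and its edges all belong to one biconnected block of $G[L_{i-1}]$, namely a layer component $C_{i-1,j'}$. So $B(C)\subseteq C_{i-1,j'}$.

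Minimality and uniqueness follow from nestedness. Any cycle in $L_{i-1}$ enclosing $C$ must enclose the face $f$. Planar cycles that all enclose the same face are nested or share arcs, so the region enclosed by the facial cycle is contained in every such region. This gives uniqueness, and since a strictly smaller region would exclude part of $f$, it also gives minimality in the number of vertices. Finally, no other vertex of $L_{i-1}$ lies inside $B(C)$. Such a vertex $w$ would lie strictly inside the region of $B(C)$ but outside $f$. Then some part of $G[L_{i-1}]$ would sit inside $B(C)$, and $w$, being on the outer face of $G''$, would have to be reachable from the outer face without crossing $G''$. But $B(C)\subseteq G''$ blocks every such path, a contradiction.

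The main obstacle is the careful topological step: showing that the boundary of face $f$ yields a genuine simple cycle inside one block, rather than a closed walk through cut vertices. We would handle this by taking the innermost cycle in the cycle space of the frontier of $f$ and invoking the property that two distinct blocks share at most one vertex.
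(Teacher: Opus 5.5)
\textbf{Verdict: the overall route works, but there is a gap at the one step where the lemma has real content.}

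Your last step matches the paper's argument exactly: a cycle is 2-connected, so it lies in one block of $G[L_{i-1}]$, i.e.\ in some $C_{i-1,j'}$. For everything else the paper simply cites Lemma~6 of \cite{ABo}. You instead try to reprove that part via the face $f$ of $G[L_{i-1}]$ containing $C$, which is a viable route.

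The gap is your passage from ``the facial cycle encloses the smallest region'' to ``it is the unique smallest cycle in number of vertices'', justified by ``a strictly smaller region would exclude part of $f$''. That inference is false for general plane graphs. A hexagon bounding $f$ can sit inside a triangle, and the triangle encloses $f$ with fewer vertices.

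Your plan for the ``main obstacle'' also fails. Knowing that two blocks share at most one vertex does not make the frontier of $f$ a simple cycle. A square with a triangle attached at one vertex and drawn inside it satisfies that property, yet the face between them has a non-cycle frontier. The sentence defining $B(C)$ as ``the cycle formed by the block that contains the frontier of $f$'' likewise presupposes this.

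\textbf{The missing idea.} You only use it in your last paragraph: every vertex of $L_{i-1}$ lies on the outer face of $G''$, so no vertex of $L_{i-1}$ lies strictly inside any cycle of $G''$. Use it twice.

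First, apply it to a region-minimal cycle $Z$ of $G[L_{i-1}]$ enclosing $f$.
\begin{itemize}
\item $\mathrm{int}(Z)$ contains no vertex of $L_{i-1}$.
\item Hence any edge of $G[L_{i-1}]$ meeting $\mathrm{int}(Z)$ is a chord of $Z$.
\item Since $f$ lies on one side of such a chord, the chord would give a smaller enclosing cycle, contradicting minimality.
\item So $\mathrm{int}(Z)=f$, and $Z$ is exactly the frontier of $f$.
\end{itemize}

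Second, take any cycle $Z'$ of $G[L_{i-1}]$ enclosing $C$.
\begin{itemize}
\item Then $f\subseteq \mathrm{int}(Z')$, so every vertex of $Z$ lies in the closed region of $Z'$.
\item By the fact above, those vertices lie \emph{on} $Z'$, giving $|V(Z)|\le |V(Z')|$.
\item In the equality case, $Z'$ is a Hamiltonian cycle of $G[V(Z)]$. All edges of this graph other than those of $Z$ are drawn outside $Z$, so it is a 2-connected outerplanar graph.
\item A 2-connected outerplanar graph has a unique Hamiltonian cycle, so $Z'=Z$.
\end{itemize}

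One routine point also needs fixing. What puts $v$ in a bounded face of $G[L_{i-1}]$ is that the outer face of $G''$ is bounded only by edges of $G[L_{i-1}]$. The fact that $v$ is on the outer face of $G''-L_{i-1}$ is irrelevant.

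With these insertions, your argument becomes a complete, self-contained proof of what the paper imports from \cite{ABo}.
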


\begin{proof}
	From Lemma~6 of \cite{ABo}, there exists such cycle $B(C)$ in layer $L_{i-1}$.
	Since $B(C)$ is biconnected, it must belong to a block of $G[L_{i-1}]$, that is, a layer component $C_{i-1,j'}$.
\end{proof}

%\vspace{.2cm}
%\begin{definition}
	%For each non-empty subset $C$ of a non-vacuous layer component of layer $i$ ($i \ge 2$), the set $B(C)$ as given in Lemma~\ref{lemma: boundary cycle} is called the boundary cycle of $C$.
%\end{definition}

% We assume that we have a ve-dominating set $D$ of size at most $k$.
Assume $D$ is a ve-dominating set of an $r$-outerplanar graph $G$ with size at most $k$.
Let $k_i$ be the number of vertices of $D_i = D \cap L_i$.
Hence, $\sum_{i=1}^{r} k_i \le k$.
% In addition, set $k_0 = k_{r+1} = k_{r+2} = 0$.
Moreover, let $c_i$ denote the number of $3$-non-vacuous layer components of layer $L_i$. Let $G$ be a graph. A subset $S\subseteq V(G)$ is called a
separator of $G$, if the subgraph $G - S$ is disconnected.

%\vspace{.2cm}
\begin{prop}[\cite{ABo}]\label{prop: treewidth layer decomposition}
	Let $G$ be a plane graph with layers $L_i~(i = 1,\ldots, r)$.
	For $i=1,\ldots,l$, let $\mathcal{L}_i$ be a set of consecutive layers, i.e., $\mathcal{L}_{i} = \{L_{j_i}, L_{j_i+1}, \ldots L_{j_i+n_i}\}$, such that $\mathcal{L}_i \cap \mathcal{L}_{i'} = \emptyset$ for all $i \neq i'$.
	Moreover, suppose $G$ can be decomposed into components, each of treewidth of at most $t$, by means of separators $S_1,\ldots,S_l$, where $S_i \subseteq \bigcup_{L \in \mathcal{L}_i} L$ for all $i = 1,\ldots, l$.
	Then $G$ has a treewidth of at most $t + 2s$, where $s = \max \limits_{1 \le i \le l} |S_i|$.
\end{prop}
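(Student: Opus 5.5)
We prove the proposition (cited from \cite{ABo}) by gluing tree-decompositions of the pieces along the separators. We proceed by induction on the number $l$ of separators, and at each stage keep track of which separator vertices have already been added to the bags.

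\textbf{Setup.} Order the layer groups by depth, so that $\mathcal{L}_1, \ldots, \mathcal{L}_l$ appear from the outside in. Because the groups consist of consecutive layers and are pairwise disjoint, removing $S_1 \cup \cdots \cup S_l$ splits $G$ into components. Each such component lies between two consecutive separator groups $\mathcal{L}_{i}$ and $\mathcal{L}_{i+1}$; here we allow $i=0$ or $i=l$ for the outermost and innermost ranges.

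\textbf{Key step.} The main claim is that each component $H$ has neighbours in at most two of the separators, namely $S_i$ and $S_{i+1}$. The reason is that an edge of $G$ joins vertices whose layers differ by at most one. Hence a path leaving $H$ cannot reach a separator $S_m$ with $m \notin\{i,i+1\}$ without first passing through the intermediate layer group. Some care is needed here: the components are bounded by $S_i$ and $S_{i+1}$ only if the $S_i$ really separate the layer ranges. We read the hypothesis ``$G$ can be decomposed into components by means of separators $S_1,\dots,S_l$'' in exactly this sense, namely that every component of $G - \bigcup_m S_m$ has $N(H) \subseteq S_i \cup S_{i+1}$ for some $i$.

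\textbf{Building the decomposition.} For each component $H$, take a tree-decomposition of width at most $t$ and add $S_i \cup S_{i+1}$ to every bag. Its width becomes at most $t + 2s$, and it now covers every edge between $H$ and the two adjacent separators. Edges inside a separator $S_m$, or between $S_m$ and $S_{m+1}$, are covered because each enlarged bag of a component adjacent to both contains both sets. If no such component exists, we add a single bag $S_m \cup S_{m+1}$, of size at most $2s$.

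\textbf{Connecting into a tree.} Next we link the enlarged decompositions into a single tree while keeping the third condition of Definition~\ref{def: tree-decomposition}. For each $m$, form a chain or star hub node with bag $S_m \cup S_{m+1}$. Every decomposition of a component lying between $S_m$ and $S_{m+1}$ is attached to this hub by an edge from an arbitrary one of its bags; since all its bags contain $S_m\cup S_{m+1}$, the subtree condition holds locally. The hubs are then linked consecutively, hub $m$ to hub $m+1$. A vertex $v \in S_m$ appears in hubs $m-1$ and $m$ and in all components adjacent to $S_m$, and these form a connected subtree. A vertex of a component $H$ appears only in the decomposition of $H$, because components are disjoint and do not meet the separators.

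\textbf{Conclusion and obstacle.} All bags have size at most $(t+1) + 2s$, so the width is at most $t + 2s$. The main obstacle is justifying the key step, that each component touches at most two consecutive separators. The layer-adjacency argument gives this, and the rest is bookkeeping.
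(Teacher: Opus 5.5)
Your argument is correct and is essentially the standard gluing proof of \cite{ABo}. The paper cites that proof rather than giving its own, and it re-implements the same construction in Step~6 of the proof of Theorem~\ref{thm: planar k ve dominating algorithm}: enlarge every bag of each component's decomposition by its two neighbouring separators and chain everything along the separator order. Your hub bags $S_m\cup S_{m+1}$ even cover $S_m$--$S_{m+1}$ edges when no component lies between those two separators, a case the paper's chain $\mathcal{S}_j,\mathcal{T}_{i_1},\ldots,\mathcal{T}_{i_k},\mathcal{S}_{j+1}$ leaves implicit.

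The one point to fix is the justification of your key step. Layer adjacency alone does not give it, since a path can cross the layers of $\mathcal{L}_b$ without meeting $S_b$. It follows instead from each $S_b$ separating the layers of the groups before $\mathcal{L}_b$ from those after it. Indeed, a component adjacent to both $S_a$ and $S_c$ with $a<b<c$ would yield an $S_a$--$S_c$ path whose interior lies in the component and whose ends lie in $S_a$ and $S_c$ (both disjoint from $S_b$), hence a path avoiding $S_b$.
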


\begin{figure}[t]
	\centering         %使图片居中放置
	\includegraphics[width=0.5\linewidth]{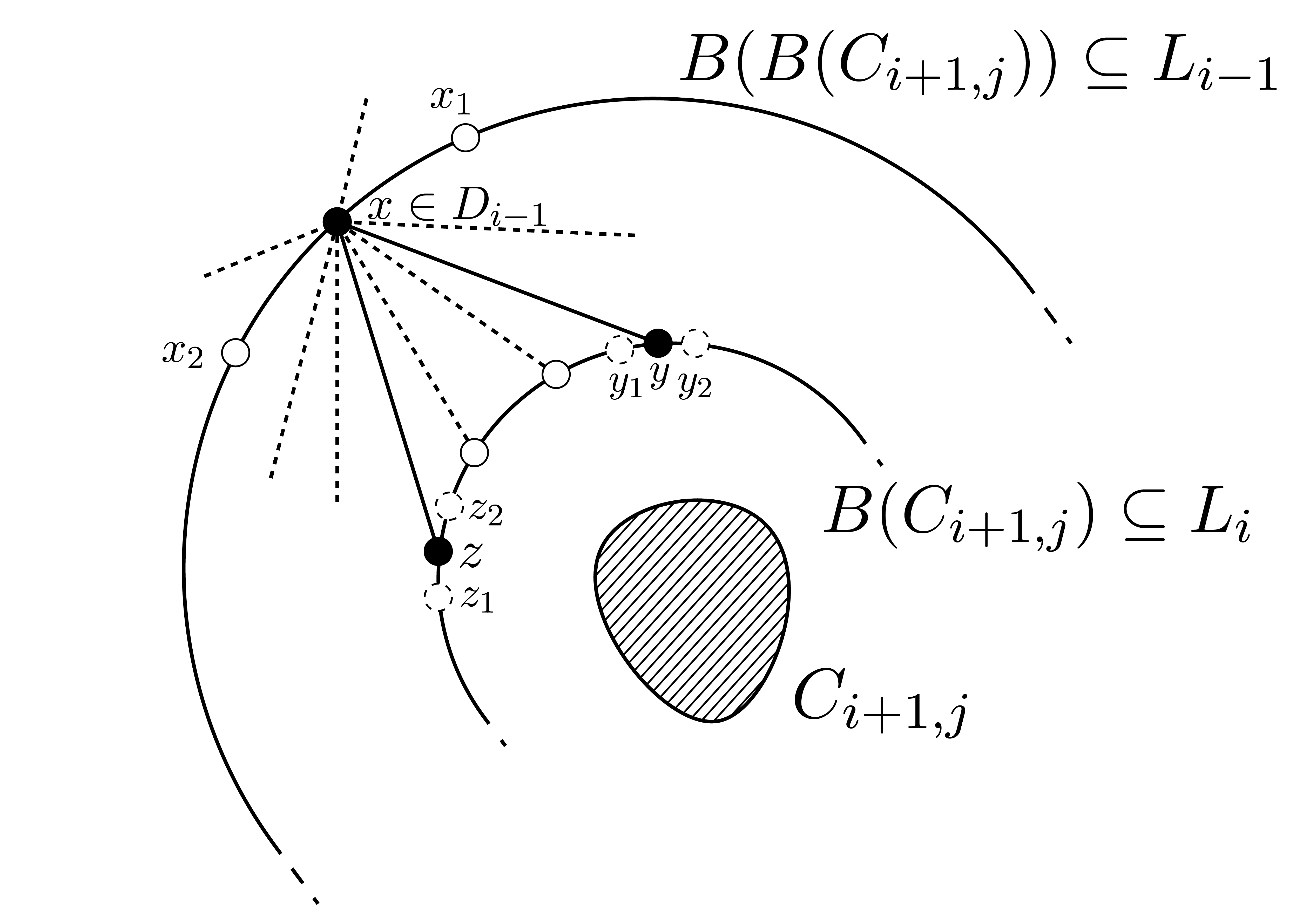}
	\caption{(Generalized) Upper Triples.}
	\label{fig: upper triples}
\end{figure}

Proposition~\ref{prop: treewidth layer decomposition} is a useful tool when estimating the treewidth of $G$.
%Then we are going to find separators layerwisely.
In \cite{ABo}, \textit{Upper Triples, Middle Triples, Lower Triples} were defined on non-vacuous layer components. Here we extend these three concepts. All the new concepts including upper triple, middle triple and lower triples are defined on $3$-non-vacuous layer components.

%\vspace{.2cm}
\noindent
\textbf{(Generalized) Upper Triples:} A (generalized) upper triple for layer $L_i$ is associated to a $3$-non-vacuous layer component $C_{i+1,j}$ of layer $L_{i+1}$ and a vertex $x \in D_{i-1}$ that has a neighbor on the boundary cycle $B(C_{i+1,j}) \subseteq L_i$~(see Figure~\ref{fig: upper triples}). By the definition of a boundary cycle, $x \in B(B(C_{i+1,j}))$. Let $x_1,x_2\in N(x)\cap B(B(C_{i+1,j}))$.
Starting from $x_1$, we go around $x$ up to $x_2$ so that we visit all neighbors of $x$ in layer $L_i$.
% We note the neighbors of $x$ on the boundary cycle $B(C_{i+1,j})$.
Going around gives two outermost neighbors $y$ and $z$ on this boundary cycle.
 If $x$ has only a single neighbor $y$ in $B(C_{i+1,j})$, let $z=y$.
%Let the \textit{generalized upper triple} be $\{x,y,z\} \cup ( (N(y) \cup N(z)) \cap B(C_{i+1,j}))$. 
We call the set $\{x,y,z\}$ (resp.\ $\{x,y,z\} \cup ( (N(y) \cup N(z)) \cap B(C_{i+1,j}))$) an \textit{upper triple (resp.\ a generalized upper triple)} of layer $L_i$.%That is, we add the neighbors of $y$ and $z$ on $B(C_{i+1,j})$ to the generalized upper triple.

%\vspace{.2cm}
%\begin{definition}
	%For each $3$-non-vacuous layer component $C_{i+1,j}$ of $L_{i+1}$ and
	%each vertex $x \in D_{i-1}$ with neighbors in $B(C_{i+1,j})$,
	%the set $\{x,y,z\}$ (resp. $\{x,y,z\} \cup ( (N(y) \cup N(z)) \cap B(C_{i+1,j}))$) as described above is called an \textit{upper triple (resp. generalized upper triple)} of layer $L_i$.
%\end{definition}

\begin{figure}[t]
	\centering         %使图片居中放置
	\begin{minipage}{0.45\linewidth}
		\centering
		\includegraphics[width=\linewidth]{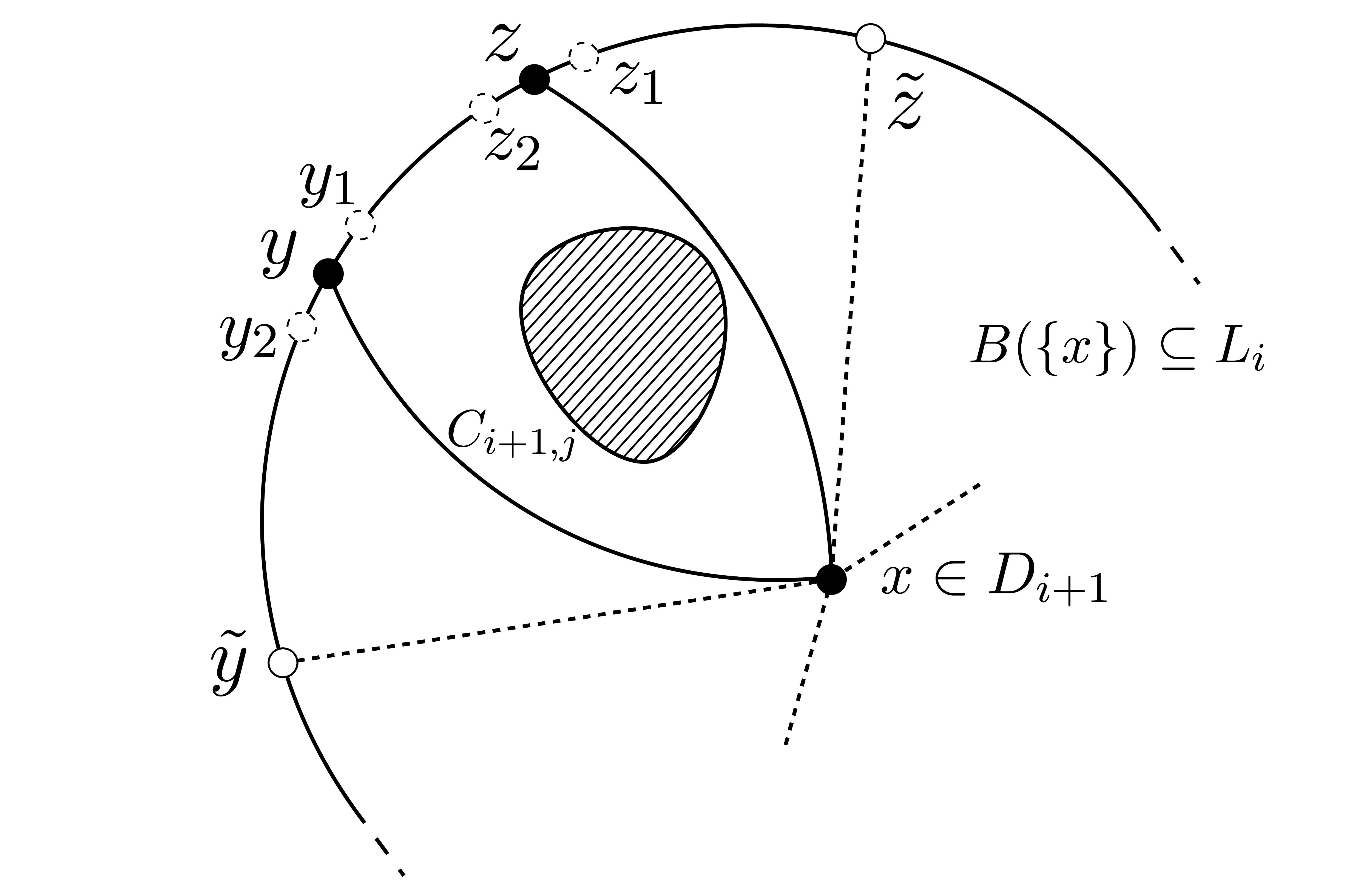}
		\caption{(Generalized) Lower Triples.}
		\label{fig: lower triples}
	\end{minipage}
	\begin{minipage}{0.45\linewidth}
		\centering
		\includegraphics[width=\linewidth]{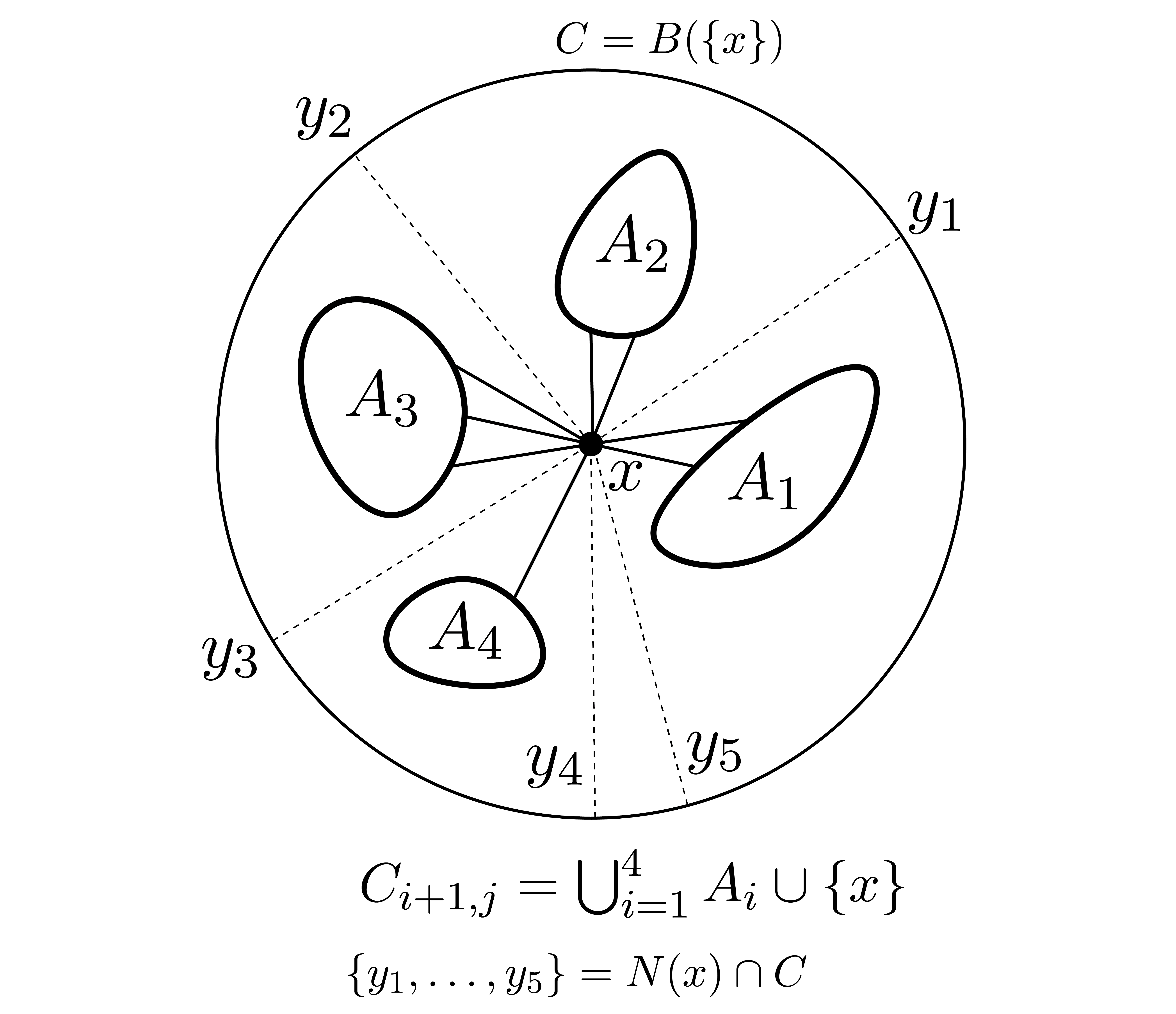}
		\caption{An example when $\tilde{y}, \tilde{z}$ do not exist.}
		\label{fig: lower triples contradiction}
	\end{minipage}
\end{figure}

% \begin{figure}[t]
% 	\centering         %使图片居中放置
% 	\includegraphics[width=0.6\linewidth]{./src/lower_contradiction.png}
% 	\caption{(Generalized) Lower Triples.}
% 	\label{fig: lower triples contradiction}
% \end{figure}

%\vspace{.2cm}
\noindent
\textbf{(Generalized) Lower Triples:} A lower triple for layer $L_i$ is associated with a vertex $x \in D_{i+1}$ and a $3$-non-vacuous layer component $C_{i+1,j}$ of layer $L_{i+1}$.
We only consider layer components $C_{i+1,j}$ of layer $L_{i+1}$ that are enclosed by the boundary cycle $B(\{x\})$.
For each pair $\tilde{y}, \tilde{z} \in B(\{x\}) \cap N(x)$ (where $\tilde{y} \neq \tilde{z}$), we consider the path $P_{\tilde{y},\tilde{z}}$ from $\tilde{y}$ to $\tilde{z}$ along the cycle $B(\{x\})$, taking the direction such that the region enclosed by $\{\tilde{z}, x\}$, $\{x, \tilde{y}\}$ and $P_{\tilde{y}, \tilde{z}}$ contains the layer component $C_{i+1,j}$ (see Figure~\ref{fig: lower triples}).

If $|N(x) \cap B(\{x\})| \ge 2$, we claim such $\tilde{y}, \tilde{z}$ must exist.
Otherwise, $\{xy \mid y \in N(x) \cap B(\{x\}) \}$ divides the interior of $B(\{x\})$ into several parts and $C_{i+1, j}$ lies in at least two of the parts.
Then $x$ must be a cut vertex of $C_{i+1,j}$, a contradiction.
An example is shown in Figure~\ref{fig: lower triples contradiction}\footnote{Compared with \cite{ABo}, our layer component is defined to be blocks rather than connected components of each layer. The purpose is to avoid this situation.}.
Let $\{y,z\} \subseteq B(\{x\}) \cap N(x)$ be the pair such that the corresponding path $P_{y,z}$ is shortest.
%The triple, then, is the three-element set $\{x,y,z\}$. 
If $x$ has no or only a single neighbor $y$ on $B(\{x\})$, then let $x=y=z$, or $y=z$ respectively.
%Let the \textit{generalized lower triple} be $\{x,y,z\} \cup ( (N(y) \cup N(z)) \cap B(\{x\}))$. 
We call the set $\{x,y,z\}$ (resp.\ $\{x,y,z\} \cup ( (N(y) \cup N(z)) \cap B(\{x\}))$) a \textit{lower triple (resp.\ generalized lower triple)} of layer $L_i$.%That is, we add the neighbors of $y$ and $z$ on $B(\{x\})$ to the generalized lower triple.

%\vspace{.2cm}
%\begin{definition}
	%For each vertex $x$ of $D_{i+1}$ and each $3$-non-vacuous layer component $C_{i+1,j}$ that is enclosed by $B(\{x\})$,
	%the set $\{x,y,z\}$ (resp. $\{x,y,z\} \cup ( (N(y) \cup N(z)) \cap B(\{x\})$)) as described above is called a \textit{lower triple (resp. generalized lower triple)} of layer $L_i$.	
%\end{definition}

\begin{figure}[t]
	\centering         %使图片居中放置
	\includegraphics[width=\linewidth]{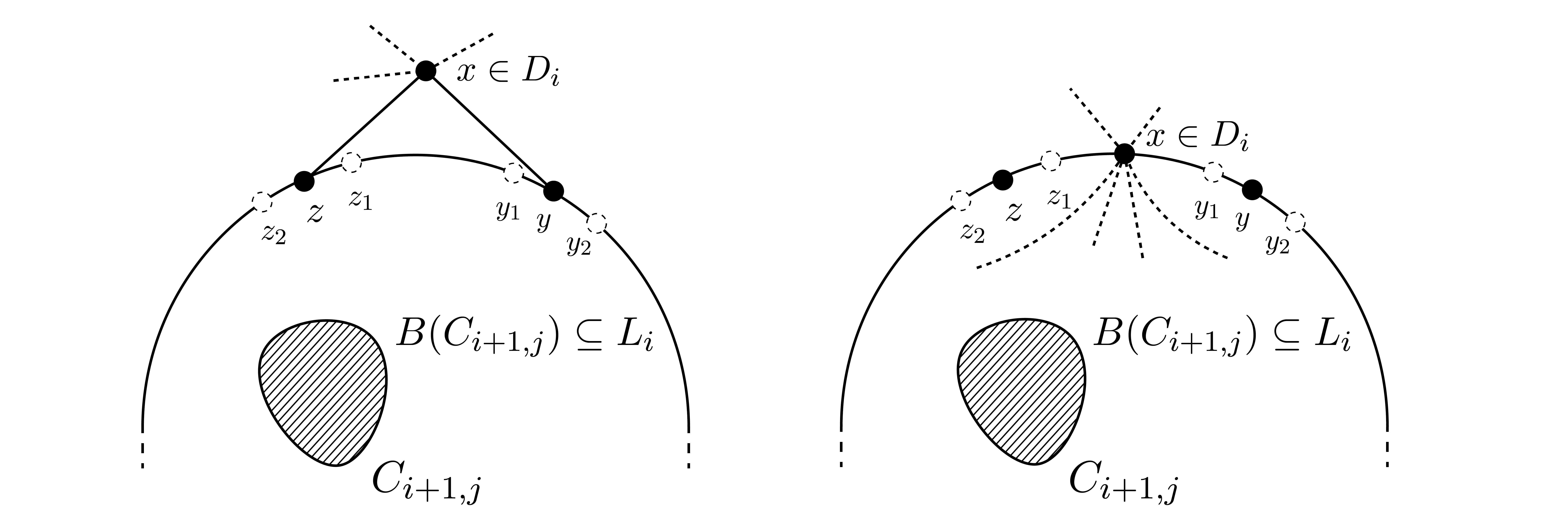}
	\caption{(Generalized) Middle Triples.}
	\label{fig: middle triples}
\end{figure}

%\vspace{.2cm}
\noindent
\textbf{(Generalized) Middle Triples:} A middle triple for layer $L_i$ is associated to a $3$-non-vacuous layer component $C_{i+1,j}$ and a vertex $x \in D_i$ that has a neighbor in $B(C_{i+1,j})$ (see Figure~\ref{fig: middle triples}).
Note that, due to the layer model, it is easy to see that a vertex $x \in D_i$ can have at most two neighbors $y,z$ in $B(C_{i+1,j})$.
Depending on whether $x$ itself lies on the cycle $B(C_{i+1,j})$ or not, we obtain two different cases which are both illustrated in Figure~\ref{fig: middle triples}.
In either of these cases, the middle triple is defined as the set $\{x,y,z\}$ where $y$ and $z$ are neighbors of $x$ on $B(C_{i+1,j})$.
Again, if $x$ has none or only a single neighbor $y$ in $B(C_{i+1,j})$, then let $x=y=z$ or $y=z$, respectively.
%Let the \textit{generalized middle triple} be $\{x,y,z\} \cup ( (N(y) \cup N(z)) \cap B(C_{i+1,j}))$. 
We call the set $\{x,y,z\}$ (resp.\ $\{x,y,z\} \cup ( (N(y) \cup N(z)) \cap B(C_{i+1,j}))$)  a \textit{middle triple (resp.\ generalized middle triple)} of layer $L_i$.%That is, we add the neighbors of $y$ and $z$ on $B(C_{i+1,j})$ to the generalized middle triple.

%\vspace{.2cm}
%\begin{definition}
	%For each $3$-non-vacuous layer component $C_{i+1,j}$ and each vertex $x \in D_i$,
	%the set $\{x,y,z\}$ (resp. $\{x,y,z\} \cup ( (N(y) \cup N(z)) \cap B(C_{i+1,j})$)) as described above is called a \textit{middle triple (resp. generalized middle triple)} of layer $L_i$.	
%\end{definition}

%\vspace{.2cm}

	Let $S_i$ (resp.\ $S_i'$) be the union of all upper triples, lower triples, and middle triples (resp.\ generalized upper triples, generalized lower triples, and generalized middle triples) of layer $L_i$. Then we have the following result.

% In \cite{ABo}, it is shown when $D$ is a dominating set, $S_i$ separates vertices of layers $L_{i-1}$ and $L_{i+2}$.
% We now give a similar result when $D$ is a ve-dominating set as Proposition~\ref{prop: ve domination separates}.

\begin{figure}[t]
	\centering         %使图片居中放置
	\begin{minipage}{0.45\linewidth}
		\centering
		\includegraphics[width=\linewidth]{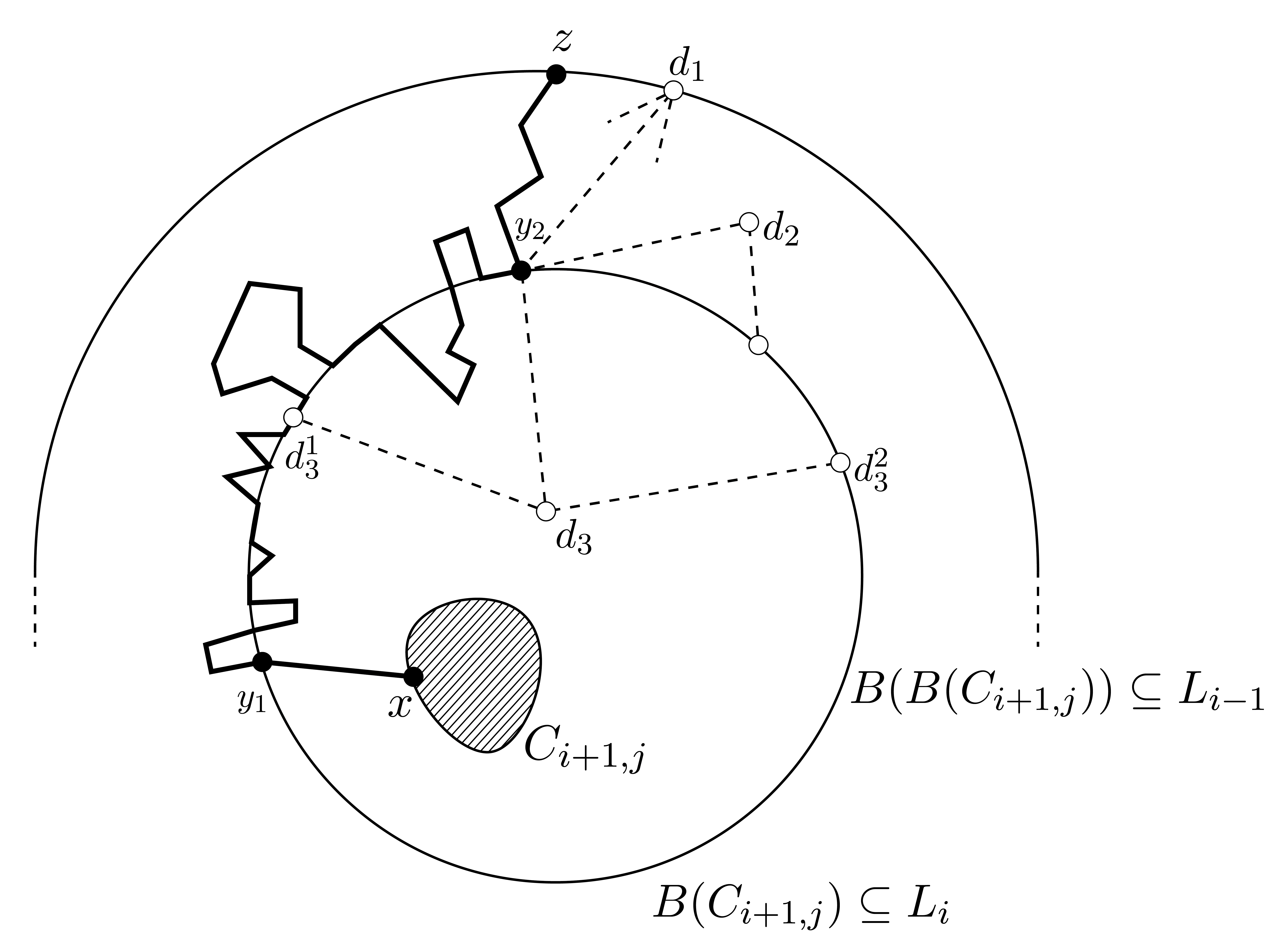}
		\caption{$S_i'$ separates $L_{i-1}$ and $L_{i+4}$ when $y_2$ is dominated.}
		\label{fig: separator 1}
	\end{minipage}
	\hspace{1em}
	\begin{minipage}{0.45\linewidth}
		\centering
		\includegraphics[width=\linewidth]{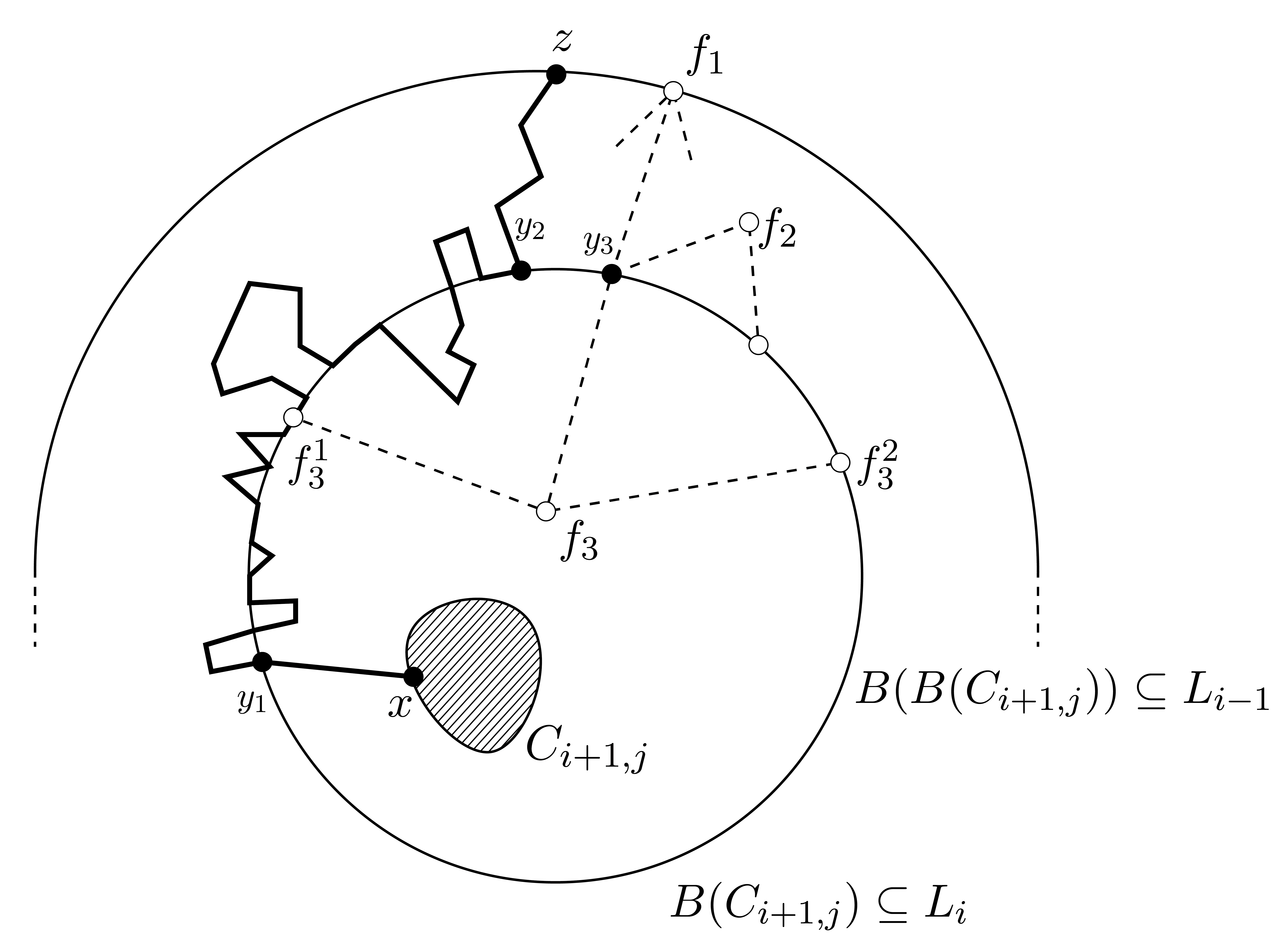}
		\caption{$S_i'$ separates $L_{i-1}$ and $L_{i+4}$ when $y_2$ is abandoned.}
		\label{fig: separator 2}
	\end{minipage}
\end{figure}

%\vspace{.2cm}
\begin{prop}\label{prop: ve domination separates}
	 $S_i'$ separates vertices of layers $L_{i-1}$ and $L_{i+4}$.
\end{prop}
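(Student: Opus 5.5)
We follow the strategy of the corresponding separation lemma of \cite{ABo}, adapted to ve-domination. We argue by contradiction: suppose there is a path $P$ in $G - S_i'$ from a vertex $u\in L_{i-1}$ to a vertex $w\in L_{i+4}$. Since an edge of a plane layered graph joins vertices in the same or consecutive layers, $P$ must cross $L_i$, $L_{i+1}$, $L_{i+2}$ and $L_{i+3}$.

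\textbf{Reduction to one component.} Let $w'$ be the last vertex of $P$ in $L_{i+1}$ before $P$ stays in deeper layers. Then $w'$ lies in some layer component $C_{i+1,j}$, and since $P$ reaches $L_{i+4}$ from inside it, $C_{i+1,j}$ is $3$-non-vacuous. By Lemma~\ref{lemma: boundary cycle}, $P$ must enter the region bounded by $B(C_{i+1,j})\subseteq L_i$. So $P$ contains a vertex $y_1\in B(C_{i+1,j})$ followed by a vertex $y_2$ in the interior, or $y_1$ is adjacent to $L_{i+1}$. The proof therefore reduces to a local claim: the triples attached to $C_{i+1,j}$ cut every route from outside $B(C_{i+1,j})$ to the $L_{i+2}$-part inside it.

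\textbf{Using ve-domination.} Take the edge $y_1y_2$ of $P$ with $y_1\in B(C_{i+1,j})$ and $y_2$ inside, in $L_{i+1}$. By Proposition~\ref{prop: equi def of ve-domination}, at least one of $y_1,y_2$ lies in $N[D]$. Since $D\cap L_t$ can only dominate layers $t-1,t,t+1$, a dominator of $y_1$ or $y_2$ lies in $D_{i-1}$, $D_i$, $D_{i+1}$ or $D_{i+2}$. This is exactly why the generalized triples include the neighbors on the boundary cycle, and why the proof needs the four-layer gap ($L_{i-1}$ to $L_{i+4}$) rather than three.

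\textbf{Case analysis.} We consider where the dominating vertex $x$ sits.
\begin{itemize}
\item If $x\in D_{i-1}$ dominates $y_1$, we use the upper triple $\{x,y,z\}$ with $y,z$ the outermost neighbors of $x$ on $B(C_{i+1,j})$. The chords $xy$, $xz$, together with the arc of $B(C_{i+1,j})$ between them, enclose $y_1$.
\item If $x\in D_i$, we use the middle triple in the same way.
\item If $x\in D_{i+1}$ or the dominator is inside, we use the lower triple $\{x,y,z\}$ with the shortest path $P_{y,z}$.
\end{itemize}
In each case we want to show that the closed curve formed by $x$, $y$, $z$ and the relevant arc separates the portion of $P$ near $y_1y_2$ from either $u$ or the deep part of $P$. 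Then $P$ must hit $\{x,y,z\}$, or pass through a boundary neighbor of $y$ or $z$; both are contained in $S_i'$, which gives the contradiction.

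The case $y_2$ abandoned (so $y_1$ dominated) and the case $y_2$ dominated (see Figures~\ref{fig: separator 1} and \ref{fig: separator 2}) must be handled separately. When $y_2$ is abandoned, every neighbor of $y_2$ is dominated. So we may shift attention to the next edge of $P$ and use the dominator of that neighbor. This shift is where the extra layers are consumed.

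\textbf{Main obstacle.} The hardest step is the planar topology: showing that $P$ cannot slip between the triple's chords and the boundary cycle without touching a generalized-triple vertex. We plan to argue via the Jordan curve theorem applied to the cycle made of $xy$, $xz$ and the boundary arc. We will use the block structure of layer components, which guarantees that $\tilde y,\tilde z$ exist as argued above, to ensure $C_{i+1,j}$ lies on one side. We will also use the fact that consecutive vertices of $P$ on $B(C_{i+1,j})$ are boundary neighbors, which is why $(N(y)\cup N(z))\cap B(C_{i+1,j})$ was added to the triples.
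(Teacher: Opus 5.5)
There is a genuine gap where ve-domination enters your argument. You apply Proposition~\ref{prop: equi def of ve-domination} to the edge $y_1y_2$ of $P$ with $y_2\in L_{i+1}$, and you correctly note that a dominator may then lie in $D_{i+2}$. But $S_i'$ contains no triple generated by a vertex of $D_{i+2}$: upper, middle and lower triples of $L_i$ come only from $D_{i-1}$, $D_i$ and $D_{i+1}$. So your fallback ``the dominator is inside, we use the lower triple'' is not available. The case where $y_1$ is abandoned and $y_2$ is dominated only from $L_{i+2}$ is therefore left open. Your remedy, moving to the next edge of $P$ when the inner endpoint is abandoned, goes the wrong way: it pushes the argument deeper, where dominators are even farther from the layers that generate triples of $L_i$.

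Two further points. An unspecified crossing edge is also the wrong anchor for a Jordan-curve argument, since $P$ can dip back into the interior of $B(C_{i+1,j})$ and re-emerge elsewhere on the cycle. And the gap up to $L_{i+4}$ is not used up by any shifting. It comes from restricting to $3$-non-vacuous components, which is what Lemma~\ref{lemma: count c_i} needs.

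The missing idea is to place the critical vertex on the boundary cycle and to use independence of abandoned vertices along the cycle, not along $P$. This is what the paper does.
\begin{enumerate}
\item Take the subpath $P'$ from the last vertex of $P$ in a $3$-non-vacuous $C_{i+1,j}$ to the first vertex of $B(B(C_{i+1,j}))$. Let $y_2$ be the last vertex of $P'$ on $B(C_{i+1,j})\subseteq L_i$.
\item Any dominator of a vertex of $L_i$ lies in $D_{i-1}\cup D_i\cup D_{i+1}$. So if $y_2$ is dominated, the argument of \cite{ABo} puts $y_2$ into an ordinary triple.
\item If $y_2$ is abandoned, a neighbor $y_3$ of $y_2$ on the cycle $B(C_{i+1,j})$ is dominated by some $f$ in those three layers. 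In the upper case, planarity with $y_2f\notin E$ makes $y_3$ an outermost neighbor of $f$; in the middle case, $y_3$ lies in the middle triple directly. Either way $y_2$, being a cycle-neighbor of $y_3$, lies in the generalized triple.
\item In the lower case, the region cut off by the lower triple of $f$ forces the segment of $P'$ from $y_1$ to $y_2$ through that triple.
\end{enumerate}
Both outcomes contradict $P'\cap S_i'=\emptyset$. This, and not consecutive vertices of $P$ on the cycle, is why the cycle-neighbors of $y$ and $z$ are added to the triples.
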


\begin{proof}
	Suppose there is a path $P$ from layer $L_{i+4}$ to layer $L_{i-1}$ that avoids $S_i'$. Let $x$ be the last vertex from some $3$-non-vacuous layer component $C_{i+1,j}$  and $z$ be the first vertex from $B(B(C_{i+1,j}))$ in $P$.
	Then there exists a path $P'$ from $x$  to  $z $  which has the following properties:
	\begin{itemize}
		\item $P'$ avoids $S_i'$.
		\item All vertices in between $x$ and $z$ along $P'$ belong to layer $L_i$ or to vacuous layer components of layer $L_{i+1}$ or to $t$-non-vacuous layer components with $t <3$ of layer $L_{i+1}$.
	\end{itemize}
	Let $y_1$ (resp.\ $y_2$) be the first (resp.\ last) vertex along the path $P'$ from $x$ to $z$ that lies on the boundary cycle $B(C_{i+1,j}) \subseteq L_i$.
Since $	D\subseteq S_i'$, $y_2\notin D$.
	
	If $y_2$ is dominated by $D$ (see Figure~\ref{fig: separator 1}), from the proof of Proposition 25 of \cite{ABo}, we have $y_2\in S_i\subseteq S_i'$, a contradiction
	\footnote{ In Proposition 25 of \cite{ABo}, the triples are associated to non-vacuous layers. But the proof is essentially the same.}.

If $y_2$ is abandoned by $D$, let $y_3$ be a neighbor of $y_2$ on $B(C_{i+1, j})$, then $y_3$ must be dominated by some vertex in $D$.
	%We consider the vertex in $D$ that dominates $y_3$.
	This vertex can lie in layer $L_{i-1}$, layer $L_i$ or layer $L_{i+1}$.

	First suppose $y_3$ is dominated by a vertex $f_1 \in L_{i-1}$ (see Figure~\ref{fig: separator 2}). Then $f_1\in B(B(C_{i+1,j}))$ and $y_2f_1 \notin E$.
	Thus $y_3$ must be an ``outermost" neighbor of $f_1$ among all vertices in $N(f_1) \cap B(C_{i+1,j})$;
	otherwise there would be an edge from $f_1$ to a vertex on $B(C_{i+1,j})$ that leaves the closed region bounded by $\{f_1, y_3\}$, $\{y_3,y_2\}$, the path from $y_2$ to $z$, and the corresponding path from $z$ to $f_1$ along $B(B(C_{i+1,j}))$, a contradiction with  $G$ being planar.
	Hence, $y_3$ would be in the upper triple of layer $L_i$ which is associated to the layer component $C_{i+1,j}$ and $f_1$.
	Then $y_2$ must be in the generalized upper triple which contradicts the assumption that $P'$ avoids $S_i'$.

	Now, suppose $y_3$ is dominated by a vertex $f_2 \in D_i$ (see Figure~\ref{fig: separator 2}).
	By the definition of middle triple, this  implies that $y_3$ is in the middle triple associated to $C_{i+1,j}$ and $f_2$.
	Then $y_2$ must be in the corresponding generalized middle triple, a contradiction.

	Consequently, $y_3$ must be dominated by some vertex $f_3$ in layer $L_{i+1}$.
	Let $\{f_3, f_3^{1}, f_3^{2}\}$, where $f_3^1, f_3^2 \in N(f_3) \cap B(C_{i+1,j})$, be the lower triple associated to $C_{i+1,j}$ and $f_3$ (see Figure~\ref{fig: separator 2}).
	Then $y_2,y_3$ are not contained in the lower triple by $P'$ avoiding $S_i'$.
	By definition, $C_{i+1,j}$ is contained in the region enclosed by $\{f_3^1,f_3\}$, $\{f_3,f_3^2\}$ and the path from $f_3^2$ to $f_3^1$ along $B(C_{i+1,j})$.
	$y_3$ cannot be in this region by the definition of lower triple.
	Hence, $y_2$ cannot be in this region. By the definition of layer decomposition, $B(C_{i+1,j}) \subseteq L_i$.
	So whenever the path from $y_1$ to $y_2$ leaves the cycle $B(C_{i+1,j})$ to its exterior, say at a vertex $q \in B(C_{i+1,j})$, then it has to return to $B(C_{i+1,j})$ at a vertex $q' \in N(q) \cap B(C_{i+1,j})$.
		The path from $y_1$ to $y_2$ will leave the region in some way.
	If it leaves along the circle $B(C_{i+1,j})$, either $f_3^1$ or $f_3^2$ must be on the path.
	If it leaves from inside, then $f_3$ must be on the path.
	Therefore, either $f_3^1$ or $f_3^2$ is in $S_i$ and this case also contradicts the fact that $P'$ avoids $S_i'$.
\end{proof}

%\vspace{.2cm}
\begin{lemma}[\cite{ABo}\footnote{The statement is not explicitly stated in \cite{ABo}, but can be obtained from Lemma~1 of \cite{ABo}. Though the triples are associated to non-vacuous layers in \cite{ABo}, the proof is essentially the same.}]\label{lemma: before count S_i'}
	$|S_i| \le 5(k_{i-1} + k_i + k_{i+1}) + 12c_{i+1}$.
\end{lemma}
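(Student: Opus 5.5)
The bound will come from counting the triples that make up $S_i$, by type, and bounding the contribution of each type separately. $S_i$ is the union of the upper triples (indexed by pairs $(x,C_{i+1,j})$ with $x\in D_{i-1}$), the middle triples (pairs with $x\in D_i$), and the lower triples (pairs with $x\in D_{i+1}$). Each triple has at most three vertices, and the vertex $x$ itself is already counted by $k_{i-1}$, $k_i$ or $k_{i+1}$. So it suffices to bound the number of pairs $(x,C_{i+1,j})$ that actually produce a triple. For that I would use the planar "charging" argument of Lemma~1 of \cite{ABo}.

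\textbf{Step 1.} Build an auxiliary bipartite plane graph $H$. Its vertices on one side are the vertices of $D_{i-1}\cup D_i\cup D_{i+1}$. Its vertices on the other side are the $3$-non-vacuous layer components $C_{i+1,j}$, each contracted to a point (or represented by its boundary cycle $B(C_{i+1,j})$). Put an edge between $x$ and $C_{i+1,j}$ whenever a triple is associated with that pair.

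\textbf{Step 2.} Check that $H$ is planar. The components lie in disjoint regions: by Lemma~\ref{lemma: boundary cycle} each one sits inside its own boundary cycle and no other layer-$i$ vertex is inside that region. Each edge of $H$ can be drawn along an actual edge of $G$ from $x$ into $B(C_{i+1,j})$ and then inside the boundary region. Since $H$ is planar and bipartite, $|E(H)|\le 2|V(H)|-4$. Hence the number of triples is $O(k_{i-1}+k_i+k_{i+1}+c_{i+1})$.

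\textbf{Step 3.} Account for the extra vertices of each triple (the $y,z$ besides $x$) and fix the constants. A pair contributes at most two new vertices $y,z$ beyond $x$. Charging these, together with the edge count, to the $x$'s and the components, and using a slightly sharper Euler-type bound that separates the two sides, should give the coefficients $5$ on the $k$'s and $12$ on $c_{i+1}$, exactly as in \cite{ABo}.

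\textbf{Main obstacle.} The hard part is Step~2 for lower triples, where the pairs are associated with $x\in D_{i+1}$ and components enclosed by $B(\{x\})$. Here $x$ lies in the same layer as the components, so $x$ may belong to one of them. Also, since layer components are blocks, one vertex can lie in several components, and the straightforward planar drawing may fail. My first attempt would be to show that, for a fixed $x$, the components inside $B(\{x\})$ that yield distinct lower triples are separated by the edges from $x$ to $B(\{x\})$, using the cut-vertex argument already given in the definition of lower triples. Then the faces around $x$ can be charged injectively, which restores the planar count. Since the restriction to $3$-non-vacuous components only shrinks the set of pairs, the remaining counting goes through verbatim from \cite{ABo}.
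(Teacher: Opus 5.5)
Your overall strategy (a planar bipartite edge count, then charging each $x$ once and at most two vertices $y,z$ per triple) is the one behind the argument of \cite{ABo} that the paper invokes. However, Step~2 is false as you set it up, and not only for lower triples. You put one edge of $H$ for every pair $(x,C_{i+1,j})$ carrying a triple, and justify planarity by saying each component ``sits inside its own boundary cycle''. But distinct $3$-non-vacuous components may share a boundary cycle. Suppose a cycle $B\subseteq L_i$ encloses $m$ disjoint $3$-non-vacuous blocks $C_1,\dots,C_m$ of $L_{i+1}$ (so $B(C_b)=B$ for all $b$), and $x_1,\dots,x_m\in D_{i-1}$ all have neighbors on $B$. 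Then every pair $(x_a,C_b)$ carries an upper triple, so $H\supseteq K_{m,m}$ is not planar, and the number of pairs is quadratic rather than linear in $k_{i-1}+c_{i+1}$. The same happens for middle triples. It happens for lower triples directly by definition, since each $x\in D_{i+1}$ is paired with every $3$-non-vacuous component enclosed by $B(\{x\})$.

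In this example only $m$ distinct triples arise. An upper or middle triple depends only on $x$ and $B(C_{i+1,j})$. A lower triple depends only on $x$ and the sector (cut out of the interior of $B(\{x\})$ by the edges from $x$) that contains $C_{i+1,j}$. So the quantity to bound is the number of distinct triples, not of pairs, and even that needs a real argument. After merging components with a common boundary cycle, the incidence graph can still contain $K_{3,3}$: take three vertices of $D_{i-1}$ adjacent to one vertex $y$ that lies on three boundary cycles. Yet only the three triples $\{x_a,y\}$ occur. Your proposal gives no such argument. For lower triples you correctly note that, for fixed $x$, the components fall into sectors. But the count across different $x$, which is the actual planarity or charging step, is only announced (``the faces around $x$ can be charged injectively''), not carried out.

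Step~3 does not establish the inequality either, and no sharper Euler bound is involved: the constants come from handling the three kinds separately. Suppose each kind has at most $2(k_\ast+c_{i+1})$ distinct triples, with $k_\ast=k_{i-1},k_i,k_{i+1}$ for upper, middle and lower triples respectively. Then the vertices $x$ contribute at most $k_\ast$ and the vertices $y,z$ at most $4(k_\ast+c_{i+1})$, i.e.\ $5k_\ast+4c_{i+1}$ per kind. Summing over the three kinds gives exactly $5(k_{i-1}+k_i+k_{i+1})+12c_{i+1}$, the $12$ being $3\cdot 4$. Your single merged $H$, were it planar, would yield $5(k_{i-1}+k_i+k_{i+1})+4c_{i+1}$. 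That mismatch shows the constants in Step~3 were asserted rather than derived.
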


%\vspace{.2cm}
\begin{lemma}\label{lemma: count S_i'}
	$|S_i'| \le 15(k_{i-1} + k_i + k_{i+1}) + 36c_{i+1}$.
\end{lemma}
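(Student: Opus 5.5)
The plan is to reduce Lemma~\ref{lemma: count S_i'} to Lemma~\ref{lemma: before count S_i'} by showing that passing from a triple to its generalized triple at most triples its size. Then $|S_i'| \le 3|S_i|$ gives the stated bound, since $3\cdot 5 = 15$ and $3 \cdot 12 = 36$.

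\textbf{Step 1: bound each generalized triple.} Consider a single (upper, middle or lower) triple $\{x,y,z\}$, associated with a cycle $B$. Here $B$ is $B(C_{i+1,j})$ for upper and middle triples, and $B(\{x\})$ for lower triples. The generalized triple adds $(N(y)\cup N(z))\cap B$. Whenever $y$ (resp.\ $z$) lies on $B$, it has exactly two neighbours on $B$. In the degenerate cases where $y$ or $z$ coincides with $x$ and does not lie on $B$, its neighbours on $B$ are at most the chosen outermost neighbours, which are already in the triple. One would want to argue that $B$ is chordless, or else that the relevant neighbourhood is still controlled. Minimality of the boundary cycle gives this: a chord of $B$ would yield a smaller cycle enclosing the same region's contents, which contradicts the uniqueness of the smallest cycle in Lemma~\ref{lemma: boundary cycle}.

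\textbf{Step 2: compare with the count of Lemma~\ref{lemma: before count S_i'}.} Each generalized triple therefore contains at most $|\{x,y,z\}| + 4$ vertices. The crude bound $3 + 4 = 7$ is not literally $3\cdot 3$, but it is at most $9$. More usefully, I would charge the new vertices to the triple vertices: every vertex of $S_i \cap B$ contributes itself plus at most two cycle-neighbours. Hence
\[
|S_i'| \le |S_i| + 2|S_i| = 3|S_i|.
\]
This holds because $S_i'$ is contained in $S_i \cup \bigcup_{v \in S_i \cap L_i} (N(v)\cap B_v)$, where $B_v$ is the relevant boundary cycle in the triple containing $v$. The counting in \cite{ABo} behind Lemma~\ref{lemma: before count S_i'} is a count of triple vertices with multiplicity over the triples. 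To be safe, I would apply the factor $3$ to that count triple by triple rather than to $|S_i|$ as a set. Summing then gives $15(k_{i-1}+k_i+k_{i+1})+36c_{i+1}$.

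\textbf{Main obstacle.} The delicate point is the claim that $y$ and $z$ contribute only two neighbours each on the relevant boundary cycle. This is where chordlessness of $B(C)$ (from the minimality in Lemma~\ref{lemma: boundary cycle}) must be justified carefully. It is also where the charging must be done per triple, with the triple's own cycle, rather than via arbitrary neighbourhoods in $L_i$. If chordlessness fails in some case, I would instead bound only the neighbours of $y,z$ along the cycle order of $B$. That is all the proof of Proposition~\ref{prop: ve domination separates} actually uses (the vertex $y_2$ there is a cycle-neighbour of $y_3$ on $B(C_{i+1,j})$). I would then redefine the generalized triples accordingly without affecting that proof.
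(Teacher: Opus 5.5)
Your proposal is correct and follows the paper's route. The paper simply asserts that each generalized triple has at most three times as many vertices as the underlying triple, and then multiplies the bound of Lemma~\ref{lemma: before count S_i'} by $3$. Your chordlessness argument fills in what the paper leaves implicit: a chord of a boundary cycle would give a shorter cycle in $L_{i-1}$ that still encloses $C$, contradicting minimality (not uniqueness), so $|N(y)\cap B|\le 2$. You are also right that the factor $3$ must be applied to the triple-wise count behind \cite{ABo}, not to $|S_i|$ as a set, because one vertex can lie on several different boundary cycles.
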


\begin{proof}
Note that the number of vertices in generalized upper triples, generalized lower triples and generalized middle triples is at most three times that in upper triples, lower triples and middle triples. Thus the conclusion holds by Lemma \ref{lemma: before count S_i'}.
\end{proof}
%Hence, the total number of vertices in all generalized upper triples is bounded by $3k_{i-1} + 12(k_{i-1} + c_{i+1})$.

%Similarly, the total number of vertices in all generalized lower (resp. middle) triples is bounded by $3k_{i+1} + 12(k_{i+1} + c_{i+1})$ (resp. $3k_{i} + 12(k_{i} + c_{i+1})$).

%By the definition of $S_i'$, this proves our claim.

% \begin{proof}
% 	From \cite{ABo}, the number of pairs $(x, C_{i+1,j})$ such that $x \in D_{i-1}$ with neighbors in $B(C_{i+1,j})$ is bounded by $2(t_{i-1} + c_{i+1})$.
% 	The statement is not explicitly stated in \cite{ABo}, but can be obtained from the proof of Lemma~1 of \cite{ABo}.
% \end{proof}

%\vspace{.2cm}
\begin{lemma}\label{lemma: count c_i}
	$c_i \le k_{i+1} + k_{i+2} + k_{i+3} + k_{i+4}$.
\end{lemma}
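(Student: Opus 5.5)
We will charge every $3$-non-vacuous layer component $C_{i,j}$ of layer $L_i$ to a vertex of $D$ lying in one of the layers $L_{i+1},\dots,L_{i+4}$. We will do this injectively, or at least so that distinct components receive distinct charges. Summing over $j$ then gives $c_i \le k_{i+1}+k_{i+2}+k_{i+3}+k_{i+4}$.

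The geometric input is the following. If $C_{i,j}$ is $3$-non-vacuous, then in the layer decomposition the node $C_{i,j}$ has a descendant path $C_{i,j}\to C_{i+1,j_1}\to C_{i+2,j_2}\to C_{i+3,j_3}$. Here $C_{i+1,j_1}$ and $C_{i+2,j_2}$ are non-vacuous, so each is a cycle-containing block. The region $R_{i,j}$ enclosed by $C_{i,j}$ contains the vertices of $C_{i+1,j_1}$, the region they bound, and so on. We will use the depth-$3$ descendant chain to find an edge that lies entirely inside $R_{i,j}$ and is far from the boundary. Concretely, since $C_{i+2,j_2}$ is a block of size at least $3$, it contains an edge $e=uv$ with $u,v\in L_{i+2}$. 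Moreover $C_{i+2,j_2}$ encloses at least one vertex $w\in L_{i+3}$ (from $C_{i+3,j_3}$). If $C_{i+3,j_3}$ has an edge, pick an edge $e'=ww'$ with $w,w'\in L_{i+3}$. Otherwise take any edge from $w$; this is possible because $C_{i+3,j_3}$ is non-vacuous, which again yields an edge inside it or to layer $L_{i+4}$.

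Since $D$ is a ve-dominating set, the edge $e'$ is ve-dominated. Hence some vertex $d\in D$ lies in $N[N[\{w,w'\}]]$, that is, within distance $2$ of $w$ or $w'$. In the layer structure, a neighbor of a vertex of $L_{i+3}$ lies in $L_{i+2}\cup L_{i+3}\cup L_{i+4}$. So $d$ lies in $L_{i+1}\cup\dots\cup L_{i+5}$. This is one layer too many, so we will instead choose the edge more carefully so that $d$ is forced into $L_{i+1},\dots,L_{i+4}$ and strictly inside $R_{i,j}$.

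The fix is to use an edge whose endpoints lie in $L_{i+2}\cup L_{i+3}$ and inside the region of $C_{i+1,j_1}$. An example is the edge $e$ of $C_{i+2,j_2}$, or an edge from $C_{i+2,j_2}$ to $w$. Any vertex within distance $2$ of such an edge then lies in $L_{i+1}\cup\dots\cup L_{i+4}$ (or $L_{i+5}$ only via $L_{i+3}$ endpoints). If we take $e$ inside $L_{i+2}$, then $d\in L_{i+1}\cup\dots\cup L_{i+4}$, and by planarity $d$ lies in the closed region of $C_{i,j}$ but not on $C_{i,j}$ itself. The reason is that a vertex of $L_{i+1}$ adjacent to $L_{i+2}$ inside $C_{i+1,j_1}$'s region must lie on or inside $C_{i+1,j_1}$, and that is interior to $R_{i,j}$.

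For injectivity, components $C_{i,j}$ and $C_{i,j'}$ of the same layer are blocks meeting in at most one vertex, and their interiors (the regions enclosed) are disjoint. This follows from property 2 of layer components together with planarity. Therefore the charged vertices $d$, which lie strictly inside the respective regions, are distinct, and the map $C_{i,j}\mapsto d$ is injective into $D_{i+1}\cup\dots\cup D_{i+4}$.

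The main obstacle will be this last planarity claim: that the dominator $d$ is strictly interior to $R_{i,j}$, and that the interiors of distinct same-layer components are disjoint. The difficulty arises especially when $d\in L_{i+1}$ and lies on a different block of $L_{i+1}$ that shares a cut vertex with $C_{i+1,j_1}$. I would handle this using Lemma~\ref{lemma: boundary cycle}: every vertex of $L_{i+1}$ involved has boundary cycle $B(\cdot)\subseteq C_{i,j}$, and no other vertex of $L_i$ lies in that region. This places $d$ inside the region enclosed by $C_{i,j}$, and hence in a region disjoint from that of any other $C_{i,j'}$.
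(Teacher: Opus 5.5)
Your strategy is the paper's: charge each $3$-non-vacuous $C_{i,j}$ to a vertex of $D$ that ve-dominates an edge lying deep inside the region bounded by $C_{i,j}$. Distinctness then comes from the fact that distinct components of $L_i$ bound disjoint regions. The paper uses an edge between $L_{i+2}$ and $L_{i+3}$; you use an edge of $C_{i+2,j_2}$.

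However, the step that locates the dominator is wrong as written. By definition, $d$ ve-dominates $e=uv$ iff $e\cap N[d]\neq\emptyset$, i.e.\ iff $d\in N[u]\cup N[v]$. So the dominator is within distance $1$ of an endpoint, not $2$. Under your radius-$2$ claim, an edge $e\subseteq L_{i+2}$ only gives $d\in L_i\cup\cdots\cup L_{i+4}$. A dominator in $L_i$ would lie on $C_{i,j}$ itself and could be a cut vertex of $G[L_i]$ shared with another component $C_{i,j'}$. Then both the layer range and the injectivity of your charging fail. Your assertions that $d\in L_{i+1}\cup\cdots\cup L_{i+4}$ and that $d$ is not on $C_{i,j}$ therefore do not follow from what you wrote. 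The parenthetical about $L_{i+5}$ comes from the same confusion.

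With the correct radius the argument closes at once, and your ``main obstacle'' disappears without Lemma~\ref{lemma: boundary cycle}. The endpoints of $e$ lie strictly inside the cycle bounding $C_{i,j}$. So every $d\in N[u]\cup N[v]$ lies on or inside that cycle, since edges cannot cross it. Also $d\in L_{i+1}\cup L_{i+2}\cup L_{i+3}$, since edges join equal or consecutive layers. Hence $d\notin C_{i,j}\subseteq L_i$, and $d$ is strictly inside, whichever block of $L_{i+1}$ it belongs to.

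This even gives $c_i\le k_{i+1}+k_{i+2}+k_{i+3}$. Unlike the paper's choice, it also does not need an edge between $L_{i+2}$ and $L_{i+3}$ to exist, which can fail when $G$ is disconnected.

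Finally, disjointness of the regions of $C_{i,j}$ and $C_{i,j'}$ does not follow from property~2 and planarity alone. You also need that all vertices of both blocks lie on the outer face of $G-(L_1\cup\cdots\cup L_{i-1})$, so that neither block has a vertex strictly inside the other's bounding cycle. Two such cycles meeting in at most one vertex then have disjoint interiors.
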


\begin{proof}
	Recall that $c_i$ is the number of $3$-non-vacuous layer components in layer $L_i$, i.e., there is at least one edge between layer $L_{i+2}$ and $L_{i+3}$ contained within each such layer component.
	Such an edge can only be dominated by a vertex from layer $L_{i+1}, L_{i+2}, L_{i+3}$ or $L_{i+4}$. Moreover, each such edge must be dominated by a different vertex.
	Hence the result holds.
\end{proof}

% Combining Lemma~\ref{lemma: count S_i'} and Lemma~\ref{lemma: count c_i}, it results in:

%\vspace{.2cm}
\begin{prop}\label{prop: S_i' k-upper bound}
	$\sum_{i=1}^{r}|S_i'| \le 189k$, where $r$ is the number of layers of the graph.
\end{prop}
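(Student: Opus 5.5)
We sum the bound of Lemma~\ref{lemma: count S_i'} over all layers and then use Lemma~\ref{lemma: count c_i} to eliminate the $c_{i+1}$ terms. For this we set $k_t = 0$ and $c_t = 0$ for every index $t \le 0$ or $t > r$. This convention is harmless: there are no vertices of $D$ outside the layers $L_1,\dots,L_r$, and there are no layer components outside them either. With it, $\sum_{t} k_t = \sum_{t=1}^{r} k_t \le k$.

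First, summing Lemma~\ref{lemma: count S_i'} over $i=1,\dots,r$ gives
\[
\sum_{i=1}^{r}|S_i'| \le 15\sum_{i=1}^{r}(k_{i-1}+k_i+k_{i+1}) + 36\sum_{i=1}^{r}c_{i+1}.
\]
In the first sum each $k_t$ occurs in at most three terms, namely for $i=t-1,t,t+1$. Hence the first part is at most $15\cdot 3k = 45k$.

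Second, we apply Lemma~\ref{lemma: count c_i} with index $i+1$:
\[
c_{i+1} \le k_{i+2}+k_{i+3}+k_{i+4}+k_{i+5}.
\]
Summing over $i$, each $k_t$ occurs at most four times, for $i=t-5,\dots,t-2$. So $\sum_i c_{i+1} \le 4k$, and the second part is at most $36\cdot 4k = 144k$.

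Adding the two parts gives $45k+144k = 189k$, as claimed.

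The only point needing care is the boundary: the indices $i-1$, $i+1$ and $i+5$ can fall outside $[1,r]$. We must check that Lemma~\ref{lemma: count S_i'} and Lemma~\ref{lemma: count c_i} remain valid under the zero convention at the first and last layers. For instance, $c_{i+1}=0$ when layer $i+1$ has no $3$-non-vacuous components, and $D_{i-1}=\emptyset$ when $i=1$, so no upper triples exist in that case. Once this is settled, the argument is purely a counting of multiplicities.
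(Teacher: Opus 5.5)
Your proof is correct and is the same argument as the paper's one-line proof. You sum Lemma~\ref{lemma: count S_i'} over the layers, which gives at most $15\cdot 3k=45k$, and you apply Lemma~\ref{lemma: count c_i} to $c_{i+1}$, which gives at most $36\cdot 4k=144k$, for a total of $189k$; your convention $k_t=c_t=0$ for indices outside $1,\dots,r$ is exactly what the paper uses implicitly.
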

\begin{proof}
	This follows directly by Lemmas \ref{lemma: count S_i'}, \ref{lemma: count c_i} and the fact $\sum_{i=1}^{r}k_i \le k$.
\end{proof}

%\vspace{.2cm}
\begin{theorem}\label{thm: sqrt k separator}
	A planar graph with ve-domination number $k$ has treewidth of at most $ 18\sqrt{14} \sqrt{k} + 14 $.
\end{theorem}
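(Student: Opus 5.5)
The plan is to follow the layerwise separator argument of \cite{ABo}: combine Proposition~\ref{prop: ve domination separates} and Proposition~\ref{prop: S_i' k-upper bound} with Proposition~\ref{prop: treewidth layer decomposition}, and finish with Theorem~\ref{thm: outerplanarity treewidth}.

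Fix a planar embedding of $G$ with layers $L_1,\ldots,L_r$, and a minimum ve-dominating set $D$ with $|D|=k$. Choose an integer period $d\ge 3$, to be fixed later. For each offset $m\in\{1,\ldots,d\}$ consider the indices $i\equiv m \pmod d$. Since $\sum_{i=1}^r |S_i'|\le 189k$ by Proposition~\ref{prop: S_i' k-upper bound}, averaging over the $d$ residue classes gives an offset $m$ with
\[\sum_{i\equiv m \ (\mathrm{mod}\ d)} |S_i'| \le 189k/d .\]
In particular each chosen $|S_i'|\le 189k/d$.

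Every generalized triple of layer $L_i$ consists of a vertex of $D_{i-1}\cup D_i\cup D_{i+1}$ together with vertices of $L_i$. Hence $S_i'\subseteq L_{i-1}\cup L_i\cup L_{i+1}$. Because $d\ge 3$, the layer sets $\mathcal L_t=\{L_{i_t-1},L_{i_t},L_{i_t+1}\}$ for consecutive chosen indices $i_t<i_{t+1}$ are pairwise disjoint, as Proposition~\ref{prop: treewidth layer decomposition} requires.

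By Proposition~\ref{prop: ve domination separates}, $S_{i_t}'$ separates $L_{i_t-1}$ from $L_{i_t+4}$. So after deleting all chosen separators, every remaining component meets only layers with indices in a window from roughly $i_t-1$ to $i_{t+1}+3$ (for the first and last windows, from $1$ or up to $r$). Such a component is therefore $(d+c)$-outerplanar for a small absolute constant $c$. The point requiring care is checking that a component's induced embedding really has outerplanarity at most $d+c$: its outer vertices lie in the lowest layer it meets, and peeling proceeds layer by layer. By Theorem~\ref{thm: outerplanarity treewidth} each component then has treewidth at most $3(d+c)-1$.

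Proposition~\ref{prop: treewidth layer decomposition} now yields
\[ tw(G)\le 3(d+c)-1+2\cdot\frac{189k}{d}. \]
Minimizing $3d+378k/d$ gives $d\approx\sqrt{126k}$ and the value $2\sqrt{1134k}=18\sqrt{14}\sqrt{k}$. The additive constant comes from $3c-1$ plus the loss from rounding $d$ to an integer (at most $+3$). With $c=4$ from the window $L_{i_t-1},\ldots,L_{i_{t+1}+3}$, this gives $3\cdot 4-1+3=14$.

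The main obstacle is the bookkeeping of exactly how many layers a component can span between two consecutive separators. This needs the separation property of Proposition~\ref{prop: ve domination separates}, which only works for $3$-non-vacuous components, so I must argue that vacuous or $t$-non-vacuous parts with $t<3$ only add a bounded number of extra layers. Pinning this down precisely is what produces the constant $14$. The small cases, where $k$ is small and $d<3$ or $r<d$, are handled directly: there $G$ itself has outerplanarity $O(1)$ relative to the bound, and Theorem~\ref{thm: outerplanarity treewidth} suffices.
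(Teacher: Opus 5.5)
Your proof is correct. It uses the same ingredients as the paper: the separation property of Proposition~\ref{prop: ve domination separates}, the bound $\sum_i|S_i'|\le 189k$, Proposition~\ref{prop: treewidth layer decomposition} and Theorem~\ref{thm: outerplanarity treewidth}. What differs is how you select the separators.

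The paper fixes the period at $5$ and takes the residue class $\mathcal{S}_a$ of total size at most $189k/5$. It then keeps only those $S_i'$ of size at most $s(k)=\alpha\sqrt{k}$. By pigeonhole, a kept one occurs among any $n(k)=\frac{189}{5\alpha}\sqrt{k}$ consecutive members. You instead make the period $d$ itself the parameter and keep \emph{every} separator in the best class mod $d$, bounding each one by the class total $189k/d$ (Baker-style shifting).

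Let $g$ be the layer gap between consecutive separators and $s$ their maximum size. In both schemes $gs\le 189k$ and one minimizes $3g+2s$, which is why both give the same constant $2\sqrt{6\cdot 189}=18\sqrt{14}$. Your version avoids the pigeonhole step and handles integrality cleanly: rounding $d$ up costs at most $3$. In the paper's scheme, kept separators can be $5\lceil n(k)\rceil$ layers apart, which its count of $5(n(k)+1)$ layers does not fully cover when $n(k)$ is non-integral. On the other hand, the paper's scheme places separators wherever small ones actually occur, and that is exactly what the algorithm in Section~\ref{sec: planar graph algorithm} (Steps 2--3) searches for.

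A few bookkeeping corrections:
\begin{enumerate}
\item Your stated window $L_{i_t-1},\dots,L_{i_{t+1}+3}$ has $d+5$ layers and would give $17$, not $14$. The correct window is $L_{i_t},\dots,L_{i_{t+1}+3}$. A component meeting a layer of index at most $i_t-1$ is confined by $S_{i_t}'$ to layers of index at most $i_t+3$, so it belongs to the previous window. This gives $c=4$.
\item The ``main obstacle'' you describe is not there. Proposition~\ref{prop: ve domination separates} is unconditional; the $3$-non-vacuous restriction lives inside its proof. Since edges join only equal or consecutive layers, it separates all layers $\le i-1$ from all layers $\ge i+4$, which gives the span of $d+4$ directly.
\item A component spanning layers $j_{\min},\dots,j_{\max}$ is $(j_{\max}-j_{\min}+1)$-outerplanar. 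The reason is that layer indices can only drop when passing to a subgraph of $G-(L_1\cup\dots\cup L_{j_{\min}-1})$. This is the correct form of your remark that ``its outer vertices lie in the lowest layer it meets''.
\item Since $d=\lceil\sqrt{126k}\rceil\ge 12$ for $k\ge 1$, and $k=0$ means $G$ is edgeless, your small cases never arise. Moreover $d\ge 5$ makes the five-layer footprints $L_{i-1},\dots,L_{i+3}$ pairwise disjoint. These are the footprints matching the separation property, and they are the reason the paper uses period $5$.
\end{enumerate}
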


\begin{proof}
	We consider the following five sets of vertices:
	$\mathcal{S}_t = S_t' \cup S_{t+5}' \cup S_{t+10}' \cup \ldots, t = 1,2,3,4,5$.
	Since $ \sum_{t=1}^{5} |\mathcal{S}_t| \le 189k$ (by Proposition~\ref{prop: S_i' k-upper bound}), one of these sets has size at most $\frac{189}{5}k$, say $\mathcal{S}_a$ with $a \in \{1,2,3,4,5\}$.
	
	 We now go through the sequence $S_{a}', S_{a+5}', S_{a+10}', \ldots$ and look for separators of size at most $s(k) = \alpha \sqrt{k}$ where $\alpha$ is a fixed constant.
	Due to the upper bound on the size of $\mathcal{S}_a$, such separators of size at most $s(k)$ must appear within every $n(k) \triangleq \frac{189k}{5\alpha \sqrt{k}} = \frac{189}{5\alpha} \sqrt{k}$ sets in the sequence.
		In this manner, we obtain a set of disjoint separators of size at most $s(k)$ each, such that any two consecutive separators from this set are at most $5n(k)$ layers apart.
	Clearly, the separators chosen in this way fulfill the requirements in Proposition~\ref{prop: treewidth layer decomposition}.

	Notice that the components cut out by chosen separators in this way each have at most $5(n(k) + 1)$ layers.
	Hence, their treewidth is at most $15(n(k) + 1) -1 $ due to Theorem~\ref{thm: outerplanarity treewidth}.
	
	By Proposition~\ref{prop: treewidth layer decomposition}, we can estimate the treewidth of the original graph $G$ with ve-domination number $k$:

	\begin{equation}
	\begin{aligned}
		tw(G) & \le 2 s(k) + 15(n(k) + 1) - 1 \\
			& = 2 \alpha \sqrt{k} + \frac{81 \cdot 7}{\alpha} \sqrt{k} + 14. \\
	\end{aligned}
	\end{equation}
	Let $\alpha = 9\sqrt{\frac{7}{2}}$, then we have $tw(G) \le 18\sqrt{14} \sqrt{k}+14$.
	This proves the theorem.
\end{proof}

Let $\gamma(G)$ and $\gamma_{ve}(G)$ be the domination number and ve-domination number of graph $G$ respectively.
\cite{ABo} proved $tw(G) \le O(\sqrt{\gamma(G)})$.
Now we have proved $tw(G) \le O(\sqrt{\gamma_{ve}(G)})$ which can derive the result of Alber~et~al~(up to constant factors) since $\gamma(G) \ge \gamma_{ve}(G)$.

Consider complete grid graph $G_n$ with $n^2$ vertices.
It is known that $tw(G_n) \ge n$ (see Corollary 89 of~\cite{TwofGridGraph}) and it is not hard to prove $\gamma_{ve}(G_n) = \Theta(n^2)$.
Therefore, $tw(G_n) = \Omega(\sqrt{\gamma_{ve}(G_n)})$ and $\lim \limits_{n\rightarrow +\infty}\gamma_{ve}(G_n) = +\infty$ which shows that our result in Theorem~\ref{thm: sqrt k separator} is optimal up to constant factors.

% ------------------------------------------------

\section{An algorithm on planar graphs}\label{sec: planar graph algorithm}

In Section~\ref{sec: planar treewidth ve-domination number}, we have $tw(G) \le O(\sqrt{\gamma_{ve}(G)})$ when $G$ is planar, and in Section~\ref{sec: tree width algorithm}, we have an algorithm for finding the ve-domination number in $O(5^{tw(G)}n)$ time.
Using the techniques in~\cite{ABo}, we can combine the above results to obtain an algorithm on planar graphs in $O(c^{\sqrt{k}}n)$ time solving the $k$-ve-dominating-set problem, that is, answering whether $\gamma_{ve}(G) \le k$.

\vskip.2cm
\begin{theorem}\label{thm: planar k ve dominating algorithm}
	There exists an algorithm that solves the $k$-ve-dominating-set problem on planar graphs in time $O(c^{\sqrt{k}}n)$, where $c=5^{18\sqrt{14}}$ and $n$ is the order of the input graph.
	Moreover, if $\gamma_{ve}(G) \le k$, a minimum-size ve-dominating set can be constructed within the same time.
\end{theorem}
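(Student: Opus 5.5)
The plan is to combine Theorem~\ref{thm: sqrt k separator} with the dynamic programming of Theorem~\ref{thm: main thm: poly algorithm of ve-dominating set}. The one missing ingredient is a way to actually produce, in linear time, a tree-decomposition of width $O(\sqrt{k})$ whenever $\gamma_{ve}(G)\le k$. Otherwise we must be able to certify that $\gamma_{ve}(G)>k$.

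First, compute a crossing-free embedding of $G$ in linear time, together with its layers $L_1,\ldots,L_r$, the layer components and the layer decomposition forest. Everything in the construction of Section~\ref{sec: planar treewidth ve-domination number} then goes through, with one difficulty: it uses a hypothetical ve-dominating set $D$, which we do not know.

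To handle this I would follow the approach of~\cite{ABo}. The separators $S_a',S_{a+5}',\ldots$ are not needed as such. What the proof of Theorem~\ref{thm: sqrt k separator} really uses is the following structural fact: one can choose layers so that consecutive chosen layers are at most $5n(k)$ apart, and each chosen layer admits a separator of size at most $s(k)=\alpha\sqrt{k}$ lying in that layer's neighbourhood.

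The algorithm then proceeds as follows.
\begin{enumerate}
\item Check $r$ directly. If $G$ has too many layers, then $\gamma_{ve}(G)>k$. By Lemma~\ref{lemma: count c_i}, every fifth layer component chain forces distinct dominators, so $r=O(k)$ is necessary once trivial layers are discarded.
\item Compute a tree-decomposition of each $r$-outerplanar piece in linear time, using the constructive proof of Theorem~\ref{thm: outerplanarity treewidth}.
\item Glue these pieces along small separators as in Proposition~\ref{prop: treewidth layer decomposition}.
\end{enumerate}

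The step I expect to be the main obstacle is obtaining the separators without knowing $D$. My first attempt would be to search for minimum vertex separators between $L_{i-1}$ and $L_{i+4}$ for each $i$. Because of the planar layered structure, these can be found in time linear in the size of the relevant band of layers, for example via shortest cycles in the dual or a flow bounded by $s(k)$. The band overlaps are bounded, so the total work is $O(n)$ after multiplying by a factor depending only on $\sqrt{k}$. If every candidate group of $n(k)$ consecutive indices fails to contain a separator of size at most $s(k)$, then by the counting in Proposition~\ref{prop: S_i' k-upper bound} and Proposition~\ref{prop: ve domination separates}, no ve-dominating set of size $k$ exists, and we answer no.

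Otherwise Proposition~\ref{prop: treewidth layer decomposition} produces a tree-decomposition of width at most $18\sqrt{14}\sqrt{k}+14$. We convert it to a nice tree-decomposition and run the algorithm of Theorem~\ref{thm: main thm: poly algorithm of ve-dominating set}. This takes time $O(n5^{18\sqrt{14}\sqrt{k}+15})=O(c^{\sqrt{k}}n)$ with $c=5^{18\sqrt{14}}$. We then compare the resulting $\gamma_{ve}(G)$ with $k$. The traceback described at the end of that proof yields a minimum ve-dominating set within the same time bound.
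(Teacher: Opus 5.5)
Your proposal follows essentially the same route as the paper: compute the layers, find minimum separators between $L_{i-1}$ and $L_{i+4}$ by flow, look for a residue $\delta$ modulo $5$ along which small separators occur at bounded gaps, build tree-decompositions of the $O(\sqrt{k})$-outerplanar pieces, glue them along the separators, and run the $O(n5^{w+1})$ dynamic program of Theorem~\ref{thm: main thm: poly algorithm of ve-dominating set}. Two small remarks: your rejection test has its quantifiers the wrong way round, because the gluing step cannot guarantee the width bound in your ``otherwise'' branch; you should answer ``no'' when, for every $\delta$, \emph{some} window of $n(k)$ consecutive indices contains no separator of size at most $s(k)$, not only when every window fails. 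Also, your preliminary check that $r=O(k)$ is harmless but unnecessary.
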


\begin{proof} %Let $G$ be an $r$-outerplanar graph with layers $L_1, \ldots, L_r$. Let $L_i = \emptyset$ for all $i<0$ and $i > r$.
Our algorithm proceeds as follows:

\begin{enumerate}[label=Step \arabic*:]
	\item Embed the input planar graph $G(V,E)$ crossing-free into the plane. Determine the outerplanarity number $r$ of this embedding and get all layers $L_1,\ldots,L_r$. Let $L_i = \emptyset$ for all $i<0$ and $i > r$.
	\item For $\delta \in \{1,2,3,4,5\}$ and $i = 0, \ldots, \lfloor r/5 \rfloor -1$, find the minimum separator $\tilde{S}_{5i+\delta}$ which separates layers $L_{5i+\delta -1} $ and $L_{5i+\delta +4} $. Let $\tilde{s}_{5i+\delta} = |\tilde{S}_{5i+\delta}|$.
	\item Check whether there exists a $\delta \in \{1,2,3,4,5\}$ and an increasing sequence $(i_j)_{1 \le j \le t}$ of indices, such that
	\[ \tilde{s}_{5i_j + \delta} \le s(k) = 9\sqrt{ \frac{7}{2}}\sqrt{k}, ~~~~~~~\mbox{ for all $j=1,2,\ldots,t$ ~~~and } \]
	\[ |i_{j+1} - i_j |  \le n(k) =  \frac{3\sqrt{14}}{5}\sqrt{k}, \mbox{ for all $j=1,2,\ldots,t$ where $i_{t+1} = r+1$.} \]
	If the answer is ``no'', then there is no $k$-ve-dominating set.
	\item Consider the separators $\mathcal{S}_j = \tilde{S}_{5i_j +\delta}$ for $j=1,2,\ldots, t$ and let $\mathcal{S}_j = \emptyset$ for all other $j$.
	% For each $j=0,\ldots, t$, let $G_j$ be the subgraph cut out by separators $S_j$ and $S_{j+1}$, or, more precisely, let
	% \[
	% 	G_j = G - \left( \bigcup_{l=(3i_j+\delta) - 1}^{3i_{j+1}+\delta +1} L_{l} \backslash ( \mathcal{S}_j \cup \mathcal{S}_{j+1} ) \right)	
	% \]
	Let $\{G_i\}_{1 \le i \le m}$ be the connected components in $G[V-\bigcup_{1 \le j \le t}\mathcal{S}_j]$.
	Note that $G_i$ is at most $5(n(k) + 1)$-outerplanar for all $1 \le i \le m$.
	\item Construct tree-decompositions $\mathcal{T}_i$ for $G_i (i=1,\ldots,m)$ with $O(n)$ nodes and width at most $15n(k)+14$ each.
	\item Construct tree-decomposition $\mathcal{T}$ of $G$ with $O(|V(G_i)|)$ nodes using $\mathcal{T}_i, 1 \le i \le m$ and $\mathcal{S}_j, 1 \le j \le t$.
	\item Solve the ve-dominating set problem for $G$ with tree-decomposition $\mathcal{T}$ using the algorithm in Theorem~\ref{thm: main thm: poly algorithm of ve-dominating set}.
\end{enumerate}

Then we go into details of each step.
Step 1 can be solved in linear time~\cite{embedding_chiba1985linear} and Step 2 can be solved with well-known techniques based on maximum flow~\cite{embedding_chiba1985linear}.
In Step 3, by the consideration in Theorem~\ref{thm: sqrt k separator}, if $\gamma_{ve}(G) \le k$, such $\delta$ and sequence must exist. Hence, when the answer of Step 3 is ``no'', $\gamma_{ve}(G) > k$ must hold.
Step 5 is justified by Theorem~\ref{thm: outerplanarity treewidth} and can be solved in $O(n\sqrt{k})$ time by Theorem~14 of~\cite{ABo}.
% Theorem~\ref{thm: algorithm outerplanar treewidth}

% %\vspace{.2cm}
% \begin{theorem}[\cite{ABo}]\label{thm: algorithm outerplanar treewidth}
% 	Let an $r$-outerplanar graph $G(V,E)$ be given together with an $r$-outerplanar embedding. Then a tree-decomposition with width at most $3r-1$ and with $O(n)$ nodes can be found in $O(rn)$ time.
% \end{theorem}

In Step 6, we can construct tree-decomposition $\mathcal{T}$ in the following way (see Figure~\ref{fig: construction step 6}):
\begin{enumerate}[label=(\arabic*)]
	\item We say a $G_i$ connects a separator $\mathcal{S}_j$ if there is an edge between $G_i$ and $\mathcal{S}_j$ in $G$.
	By default, when $\mathcal{S}_j$ is an empty set, we also say $G_i$ connects $\mathcal{S}_j$.
	By the way we choose separators, each $G_i$ connects at most two separators (except empty sets), and, if so, the two separators are consecutive, that is, $\mathcal{S}_{p}$ and $\mathcal{S}_{p+1}$ for some $p$.
	For each separator $\mathcal{S}_j, j = 0, \ldots, t+1$, create a node containing all vertices of $\mathcal{S}_j$ which is still denoted by $\mathcal{S}_j$.
	\item For each fixed $0 \le j \le t$, assume 
	\begin{equation*}
	\begin{aligned}
		&\{G_{i_1}, \ldots, G_{i_k}\} \\ 
		=& \{G_i \mid \mbox{$G_i$ connects $\mathcal{S}_j$ and $\mathcal{S}_{j+1}$ or $G_i$ only connects $\mathcal{S}_{j+1}$ (except empty sets)}\}.
	\end{aligned}
	\end{equation*}
	In the tree-decomposition $\mathcal{T}$, sequentially connect $\mathcal{S}_j, \mathcal{T}_{i_1}, \ldots, \mathcal{T}_{i_k}, \mathcal{S}_{j+1}$.
	\item For each $0 \le j \le t$ and $\{G_{i_1}, \ldots, G_{i_k}\}$ is defined as before.
	If $N$ is a node of $\mathcal{T}_{i_l}$ for some $1 \le l \le k$, replace $N$ by $N' = N \cup \mathcal{S}_{j} \cup \mathcal{S}_{j+1}$.
\end{enumerate}

It is easy to verify $\mathcal{T}$ is indeed a tree-decomposition of $G$ with width at most $15n(k) + 14 + 2s(k) = 18\sqrt{14}\sqrt{k} + 14$.

\begin{figure}[t]
	\centering         %使图片居中放置
	\includegraphics[width=0.9\linewidth]{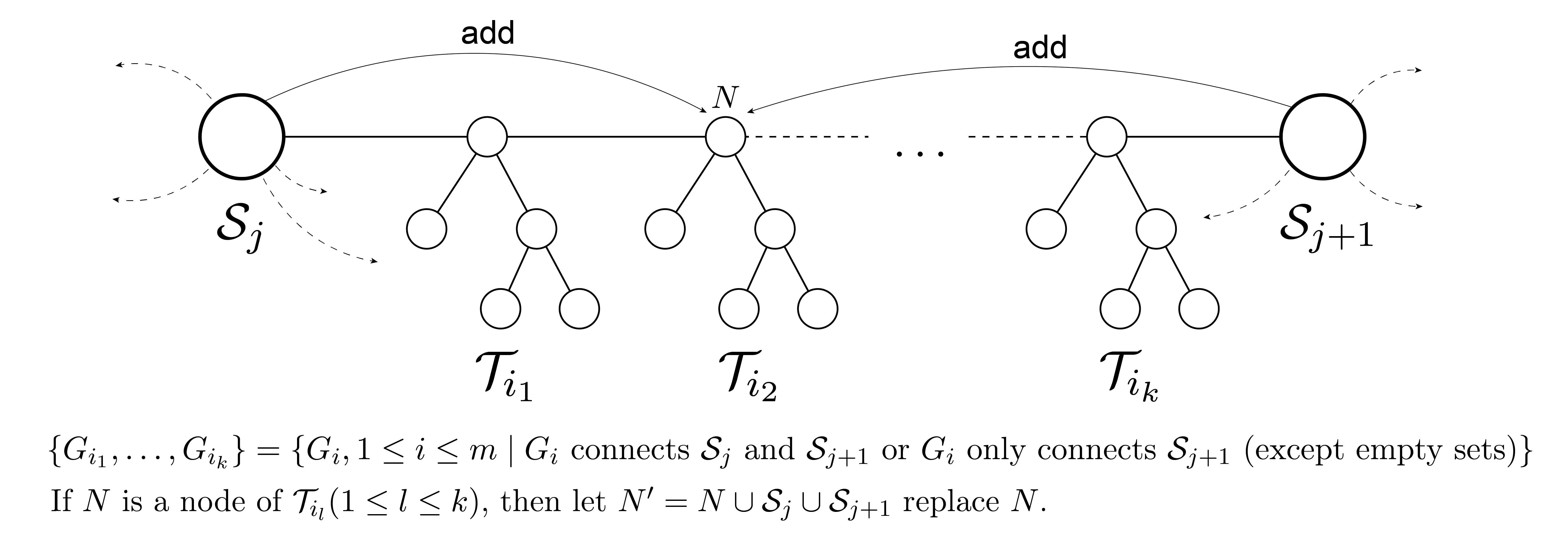}
	\caption{Construction of Step 6.}
	\label{fig: construction step 6}
\end{figure}

Obviously, the running time bottleneck is Step 7 and the whole algorithm takes $O(c^{\sqrt{k}}n)$ time, where $c = 5^{18\sqrt{14}}$.
\end{proof}

% ------------------------ Conclusions ----------------------
\section{Conclusions}\label{sec: conclusions}

We first reviewed the research status of ve-dominating set and introduced treewidth.
Then, we established a polynomial-time algorithm for calculating the ve-domination number on graphs with bounded treewidth.

We show that for the treewidth of planar graphs, we have $tw(G) \le 18\sqrt{14} \sqrt{\gamma_{ve}(G)} + 14$ where $\gamma_{ve}(G)$ is the ve-domination number of $G$ and the result is optimal up to constant factors.
The constant $18\sqrt{14}$ is huge here and we believe it can be improved by estimating the upper bound more finely.

As treewidth is an important parameter of graphs, there are other commonly used graph parameters like splitwidth and cliquewidth.
It is worth trying to solve the ve-domination problem on graphs with those bounded parameters.
Moreover, there are other variations of domination number.
It would be interesting to investigate the relationship between treewidth and extended domination number on planar graphs in similar ways.

We also establish an algorithm solving the $k$-ve-domination problem on planar graphs in $O(c^{\sqrt{k}}n)$ time.
In the proof of Theorem~\ref{thm: planar k ve dominating algorithm}, we actually present a way to build a tree-decomposition of a planar graph $G$ with width at most $O(\sqrt{k})$ if it is known that $\gamma_{ve}(G) \le k$.
It may be useful to design other algorithms on planar graphs.

\section*{Acknowledgement}
The research of Lu is supported by the National Natural Science Foundation of China (Grant 12571372).

\bibliographystyle{wyc4}
\bibliography{ref.bib}

@book{nice_tree_decomposition_kloks_book1994,
  title={Treewidth: computations and approximations},
  author={Kloks, Ton},
  year={1994},
  publisher={Springer}
}

@book{vedomination_introduce,
  title={Theoretical and algorithmic results on domination and connectivity (Nordhaus-Gaddum, Gallai type results, max-min relationships, linear time, series-parallel)},
  author={Peters Jr, Kenneth W},
  year={1986},
  publisher={Clemson University}
}

@phdthesis{vedomination_introduce2,
  title={Vertex-edge and edge-vertex parameters in graphs},
  author={Lewis, Jason Robert},
  year={2007},
  school={Clemson University}
}

@article{paul2022,
  title={Results on vertex-edge and independent vertex-edge domination},
  author={Paul, Subhabrata and Ranjan, Keshav},
  journal={Journal of Combinatorial Optimization},
  volume={44},
  number={1},
  pages={303--330},
  year={2022},
  publisher={Springer}
}

@article{krishnakumari2014bounds,
  title={Bounds on the vertex--edge domination number of a tree},
  author={Krishnakumari, Balakrishna and Venkatakrishnan, Yanamandram B and Krzywkowski, Marcin},
  journal={Comptes rendus mathematique},
  volume={352},
  number={5},
  pages={363--366},
  year={2014},
  publisher={Elsevier}
}

@article{treewidth_sample1_Arnborg1991,
  title={Easy problems for tree-decomposable graphs},
  author={Arnborg, Stefan and Lagergren, Jens and Seese, Detlef},
  journal={Journal of Algorithms},
  volume={12},
  number={2},
  pages={308--340},
  year={1991},
  publisher={Elsevier}
}

@inproceedings{dominationset_treewidth_alber2002,
  title={Improved tree decomposition based algorithms for domination-like problems},
  author={Alber, Jochen and Niedermeier, Rolf},
  booktitle={Latin American Symposium on Theoretical Informatics},
  pages={613--627},
  year={2002},
  organization={Springer}
}

@article{VCP3_treewidth_bai2019,
  title={An improved algorithm for the vertex cover $ {P}_3 $ problem on graphs of bounded treewidth},
  author={Bai, Zongwen and Tu, Jianhua and Shi, Yongtang},
  journal={Discrete Mathematics \& Theoretical Computer Science},
  volume={21},
  year={2019},
  publisher={Episciences. org}
}

@article{ABo,
  title={Fixed parameter algorithms for dominating set and related problems on planar graphs},
  author={Alber and Bodlaender and Fernau and Kloks and Niedermeier},
  journal={Algorithmica},
  volume={33},
  pages={461--493},
  year={2002},
  publisher={Springer}
}

@article{TwofGridGraph,
  title={A partial k-arboretum of graphs with bounded treewidth},
  author={Bodlaender, Hans L},
  journal={Theoretical computer science},
  volume={209},
  number={1-2},
  pages={1--45},
  year={1998},
  publisher={Elsevier}
}

@inproceedings{PlanarityTreewidth,
  title={Treewidth: Algorithmic techniques and results},
  author={Bodlaender, Hans L},
  booktitle={International Symposium on Mathematical Foundations of Computer Science},
  pages={19--36},
  year={1997},
  organization={Springer}
}

@article{Baker,
  title={Approximation algorithms for NP-complete problems on planar graphs},
  author={Baker, Brenda S},
  journal={Journal of the ACM (JACM)},
  volume={41},
  number={1},
  pages={153--180},
  year={1994},
  publisher={ACM New York, NY, USA}
}

@inproceedings{peng2007roman,
  title={Roman domination on graphs of bounded treewidth},
  author={Peng, Sheng-Lung and Tsai, Yuan-Hsiang},
  booktitle={Proceedings of the 24th Workshop on Combinatorial Mathematics and Computation Theory},
  pages={128--131},
  year={2007}
}

@article{boutrig2016vertex,
  title={Vertex-edge domination in graphs},
  author={Boutrig, Razika and Chellali, Mustapha and Haynes, Teresa W and Hedetniemi, Stephen T},
  journal={Aequationes mathematicae},
  volume={90},
  pages={355--366},
  year={2016},
  publisher={Springer}
}

@article{zylinski2019vertex,
  title={Vertex-edge domination in graphs},
  author={{\.Z}yli{\'n}ski, Pawe{\l}},
  journal={Aequationes mathematicae},
  volume={93},
  number={4},
  pages={735--742},
  year={2019},
  publisher={Springer}
}

@article{embedding_chiba1985linear,
  title={A linear algorithm for embedding planar graphs using PQ-trees},
  author={Chiba, Norishige and Nishizeki, Takao and Abe, Shigenobu and Ozawa, Takao},
  journal={Journal of computer and system sciences},
  volume={30},
  number={1},
  pages={54--76},
  year={1985},
  publisher={Elsevier}
}
%% BioMed_Central_Bib_Style_v1.01

\end{document}